\crefname{section}{Section}{Sections}
\crefname{subsection}{\S}{\S\S}
\crefname{subsubsection}{\S}{\S\S}
\theoremstyle{plain}
\newtheorem{lemma}{Lemma}[section]
\newtheorem{proposition}[lemma]{Proposition}
\newtheorem{corollary}[lemma]{Corollary}
\newtheorem{theorem}[lemma]{Theorem}
\theoremstyle{plain}
\newtheorem{theoremN}{Theorem}
\theoremstyle{plain}
\newtheorem{definition}[lemma]{Definition}
\newtheorem{example}[lemma]{Example}
\newtheorem{examples}[lemma]{Examples}
\newtheorem{remark}[lemma]{Remark}
\newtheorem{remarks}[lemma]{Remarks}
\newtheorem{notation}[lemma]{Notation}
\newtheorem{construction}[lemma]{Construction}
\crefname{definition}{definition}{definitions}
\crefname{ex}{example}{examples}
\crefname{exs}{example}{examples}
\crefname{remark}{remark}{remarks}
\crefname{remarks}{remark}{remarks}
\crefname{convention}{convention}{conventions}
\crefname{notation}{notation}{notations}
\crefname{table}{table}{tables}
\crefname{lemma}{lemma}{lemmas}
\crefname{proposition}{proposition}{propositions}
\crefname{propositionN}{proposition}{propositions}
\crefname{corollary}{corollary}{corollaries}
\crefname{corollaryN}{corollary}{corollaries}
\crefname{theorem}{theorem}{theorems}
\crefname{theoremN}{theorem}{theorems}
\crefname{enumi}{}{}
\crefname{assumption}{assumption}{Assumptions}
\crefname{construction}{construction}{Constructions}
\crefname{equation}{}{}
\numberwithin{equation}{section}
\renewcommand{\theequation}{\thesection-\arabic{equation}}
\theoremstyle{nonumberplain}
\newtheorem{proof}{Proof}
\newcommand\pf[1]{\newtheorem{#1}{Proof of \Cref{#1}}}
\newcommand\bZ{{\mathbb Z}}
\newcommand\cC{{\mathcal C}}
\newcommand\cI{{\mathcal I}}
\newcommand\cL{{\mathcal L}}
\newcommand\cM{{\mathcal M}}
\newcommand\ol{\overline}
\DeclareMathOperator{\id}{id}
\DeclareMathOperator{\soc}{\mathrm{soc}}
\DeclareMathOperator{\spn}{\mathrm{span}}
\DeclareMathOperator{\Coend}{\mathrm{Coend}}
\newcommand\numberthis{\addtocounter{equation}{1}\tag{\theequation}}
\newcommand{\cat}[1]{\textsc{#1}}
\newcommand{\qedhere}{\mbox{}\hfill\ensuremath{\blacksquare}}
\newcommand{\xrightarrowdbl}[2][]{%
  \xrightarrow[#1]{#2}\mathrel{\mkern-14mu}\rightarrow
}
\title{Prescribed duality dynamics in comodule categories}
\author{Alexandru Chirvasitu}
\begin{document}

\date{}

\newcommand{\Addresses}{{
  \bigskip
  \footnotesize

  \textsc{Department of Mathematics, University at Buffalo}
  \par\nopagebreak
  \textsc{Buffalo, NY 14260-2900, USA}  
  \par\nopagebreak
  \textit{E-mail address}: \texttt{achirvas@buffalo.edu}


}}

\maketitle

\begin{abstract}
  We prove that there exist Hopf algebras with surjective, non-bijective antipode which admit no non-trivial morphisms from Hopf algebras with bijective antipode; in particular, they are not quotients of such. This answers a question left open in prior work, and contrasts with the dual setup whereby a Hopf algebra has injective antipode precisely when it embeds into one with bijective antipode. The examples rely on the broader phenomenon of realizing pre-specified subspace lattices as comodule lattices: for a finite-dimensional vector space $V$ and a sequence $(\mathcal{L}_r)_r$ of successively finer lattices of subspaces thereof, assuming the minimal subquotients of the supremum $\bigvee_r \mathcal{L}_r$ are all at least 2-dimensional, there is a Hopf algebra equipping $V$ with a comodule structure in such a fashion that the lattice of comodules of the $r^{th}$ dual comodule $V^{r*}$ is precisely the given $\mathcal{L}_r$.
\end{abstract}

\noindent {\em Key words:
  Diamond Lemma;
  Tannaka reconstruction;
  adjoint functor;
  antipode;
  comodule;
  free Hopf algebra;
  subquotient;
  triangular
  
}

\vspace{.5cm}

\noindent{MSC 2020:
  16S10;
  16T05;
  16T15;
  16T30;
  18A40;
  18M05

}


\section*{Introduction}

Consider an object in an abelian {\it left rigid monoidal category} $\cC$, i.e. (\cite[Definition 3.3.1]{par_qg-ncg}, \cite[\S 2.10]{egno}) on in which each object $V$ has a {\it left dual} $V^*$ \cite[Definition 2.10.1]{egno}: equipped with morphisms
\begin{equation*}
  V^*\otimes V\xrightarrow{\quad e\text{ (evaluation)}\quad}\text{monoidal unit }{\bf 1}
  ,\quad
  {\bf 1}
  \xrightarrow{\quad c\text{ (coevaluation)}\quad}
  V\otimes V^*
\end{equation*}
with
\begin{equation*}
  \begin{tikzpicture}[>=stealth,auto,baseline=(current  bounding  box.center)]
    \path[anchor=base] 
    (0,0) node (l) {$V$}
    +(2,.5) node (u) {$V\otimes V^*\otimes V$}
    +(4,0) node (r) {$V$}

    +(5,0) node (lr) {$V^*$}
    +(7,.5) node (ur) {$V^*\otimes V\otimes V^*$}
    +(9,0) node (rr) {$V^*$}
    ;
    \draw[->] (l) to[bend left=6] node[pos=.5,auto] {$\scriptstyle c\otimes \id$} (u);
    \draw[->] (u) to[bend left=6] node[pos=.5,auto] {$\scriptstyle \id\otimes e$} (r);
    \draw[->] (l) to[bend right=6] node[pos=.5,auto,swap] {$\scriptstyle \id$} (r);
    \draw[->] (lr) to[bend left=6] node[pos=.5,auto] {$\scriptstyle \id\otimes c$} (ur);
    \draw[->] (ur) to[bend left=6] node[pos=.5,auto] {$\scriptstyle e\otimes \id$} (rr);
    \draw[->] (lr) to[bend right=6] node[pos=.5,auto,swap] {$\scriptstyle \id$} (rr);
  \end{tikzpicture}
\end{equation*}
The (contravariant) functorial nature \cite[post Example 2.10.14]{egno} of $V\mapsto V^*$ implies that subobjects (quotients) of $V$ produce quotients (respectively subobjects) of $V^*$. In short, the subobject lattice of $V$ embeds into the dual to that of $V^*$. That embedding is in general proper (we will recall familiar examples presently), so that iterated duality generates increasingly complicated subobject lattices for the objects $V^{r*}:=(V^{(r-1)*})^*$; this is the ``duality dynamics'' of the paper's title, and one motivating issue is to determine to what extent that ``branching'' behavior (in the sense that simple objects might acquire non-trivial subobjects after dualization) can be controlled. 

The only rigid categories featuring below are those of the form $\cM^H_f$, finite-dimensional (right) comodules over Hopf $\Bbbk$-algebras $H$ for a field $\Bbbk$. Left rigidity is implemented by means of the antipode $S$ of $H$ \cite[Remark 5.3.8]{egno}, equipping the usual dual vector space $V^*$ of a right $H$-comodule
\begin{equation*}
  V
  \ni
  v  
  \xmapsto{\quad}
  v_0\otimes v_1
  \in
  V\otimes H
  \quad
  (\text{{\it Sweedler notation} \cite[\S 2.0, pp.32-33]{swe}})
\end{equation*}
with the comodule structure defined implicitly by
\begin{equation*}
  \braket{v^*,v_0}Sv_1
  =
  \braket{v^*_0,v}v^*_1
  \in H
  ,\quad
  \forall v\in V,\ v^*\in V^*
  ,\quad
  V^*\otimes V\xrightarrow{\text{usual evaluation }\braket{-,-}}\Bbbk.
\end{equation*}
As noted in the same \cite[Remark 5.3.8]{egno}, this will not, typically, make $V^*$ into a {\it right} dual \cite[Definition 2.10.2]{egno} (or {\it predual}): the usual evaluation is a comodule morphism when defined on $V^*\otimes V$, but not on $V\otimes V^*$; right duals, rather, can be defined analogous using the inverse antipode $\overline{S}$ when it exists (it doesn't always \cite[Theorem 11]{zbMATH03344702}). 

The ``control'' alluded to two paragraphs up means the degree to which duality can be expected to branch $V$ to just the desired extent and no more. To illustrate the limitations of what can be expected (again, focusing exclusively on categories of comodules), consider an $H$-comodule $V$ whose dual $V^*$ has a 1-dimensional subcomodule $L\le V^*$. $V$ must then surject onto $L^*$ (as elaborated in \Cref{re:needge2} below; the issue is that left and right duals coincide for 1-dimensional Hopf-algebra comodules), so it could not, for instance, have been irreducible (if at least $2$-dimensional). 

In part, the goal is to confirm that this is essentially the only difficulty. Extend the notation $V^{r*}$ employed above to negative $r$, in which case it denotes successive right duals of $V$. As noted, the distinction does matter once we equip spaces with comodule structures. \Cref{cor:cr2h} to \Cref{th:basis4triangchn} yields, via \Cref{con:comod2coalg}, the following conclusion. 

\begin{theoremN}\label{thn:prscrflg}
  Let $V$ be a finite-dimensional vector space and $(\cL_r)_{r\ge d}$ a sequence of increasingly finer lattices of subspaces of $V$ for some $d\in \bZ_{\le 0}\sqcup\{-\infty\}$.

  If the minimal subquotients of the supremum lattice $\bigvee_r \cL_r$ are all of dimension $\ge 2$ there is a Hopf algebra $H$ equipping $V^{r*}$ with comodule structures
  \begin{equation*}
    V^{r*}
    \xrightarrow{\quad\rho_r\quad}
    V^{r*}\otimes H
    ,\quad
    r\in \bZ_{\ge d}
  \end{equation*}
  so that
  \begin{itemize}[wide]
  \item $\rho_{r+1}$ is (left) dual to $\rho_r$;

  \item and for every $r$ the lattice of $H$-subcomodules of $V^{r*}$ is precisely $\cL_r^{r*}$ (meaning $\cL_r$ if $r$ is even and its dual if not).  \qedhere
  \end{itemize}
\end{theoremN}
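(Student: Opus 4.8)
The plan is to assemble the statement from the three tools already in place: \Cref{con:comod2coalg}, which turns a chain of mutually dual comodule structures into a single coalgebra and ultimately a Hopf algebra; \Cref{th:basis4triangchn}, which produces an explicit Diamond-Lemma basis for the Hopf algebra attached to a ``triangular chain''; and its \Cref{cor:cr2h}, which reads off the comodule-theoretic consequences. The two bullet points of the conclusion should correspond, respectively, to two structural features built into that chain: the antipode relations tying each coaction to the next, and the genericity (controlled by the basis) of the coaction subject to prescribed invariant subspaces. So the real work is to translate the lattice data $(\cL_r)_r$ into the combinatorial input of \Cref{th:basis4triangchn} and then to verify that the output realizes the $\cL_r$ \emph{exactly}.

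First I would package the lattices as a triangular chain. Setting $\cL_\infty := \bigvee_r \cL_r$, the increasing fineness of the $\cL_r$ means each is a sublattice of $\cL_\infty$, so it is enough to record, at each level $r$, which members of $\cL_\infty$ are to be made subcomodules of $V^{r*}$. Fixing a basis of $V$ adapted to a maximal chain of $\cL_\infty$ and dualizing it to matching bases of all the $V^{r*}$, each requirement ``$W\in\cL_r$ is a subcomodule'' becomes a family of linear (vanishing) constraints on the coefficients of $\rho_r$ forcing $\rho_r(W)\subseteq W\otimes H$; imposing these for all $W$ simultaneously is the triangular chain at level $r$. Because consecutive duals are governed by the antipode, as recalled around the evaluation/coevaluation display, the constraints at level $r+1$ are forced to be the antipode-transpose of those at level $r$; tracking this through the alternation of left and right duals is exactly the parity bookkeeping that produces $\cL_r^{r*}$ ($\cL_r$ for even $r$, its dual otherwise) in the statement.

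The hypothesis on minimal subquotients enters precisely here. By \Cref{re:needge2}, a $1$-dimensional subcomodule of a dual $V^{r*}$ is never ``free'': it forces a surjection of the predual onto its dual line, hence a relation linking two adjacent coactions of the chain, which would both obstruct the independent realization of the blocks and risk collapsing the basis. Requiring every minimal subquotient of $\cL_\infty$ to have dimension $\ge 2$ removes all such forced identifications, so that each prescribed block can be realized independently and the chain meets the hypotheses of \Cref{th:basis4triangchn}. Applying that theorem yields a Hopf algebra $H$ with coactions $\rho_r$ on the $V^{r*}$ together with an explicit monomial basis, and \Cref{cor:cr2h} packages the comodule statement; the duality clause (that $\rho_{r+1}$ is left dual to $\rho_r$) is then immediate, since it was imposed by the antipode relations.

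The main obstacle is the word \emph{precisely} in the second bullet: one must show the construction neither creates spurious subcomodules nor, through the interaction of the antipode relations across the whole chain, secretly imposes extra vanishing that would shrink some $\cL_r$. This is the one place where having the honest basis from \Cref{th:basis4triangchn} is indispensable. The basis shows that the only coefficients of $\rho_r$ that vanish are those the imposed constraints kill, so the coaction on each $V^{r*}$ is as generic as the prescription allows; consequently a subspace is $H$-invariant iff it is respected by every surviving coefficient, which happens iff it is compatible with the prescribed data, i.e. iff it lies in $\cL_r^{r*}$. Closing this genericity-versus-exactness gap, rather than the formal assembly of $H$, is where I expect essentially all of the difficulty to reside.
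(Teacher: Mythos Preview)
Your proposal is correct and follows essentially the same route as the paper: build the triangular skew chain from the $\cL_r$ via \Cref{con:comod2coalg}, invoke \Cref{th:basis4triangchn} under the $(\dim\ge 2)$ hypothesis, and read off the comodule statement from \Cref{cor:cr2h}.

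The one place where you diverge is the ``precisely'' clause, which you flag as the main obstacle and plan to handle by a direct genericity argument from the explicit basis. The paper instead dispatches this in a single line: once \Cref{cor:cr2h} gives injectivity of $C^r\hookrightarrow H({\bf C})$, the general fact that coalgebra \emph{embeddings} induce full, exact inclusions $\cM^{C^r}\hookrightarrow\cM^{H({\bf C})}$ of comodule categories (Schauenburg's \cite[Lemma 2.2.12]{schau_tann}) immediately identifies the $H$-subcomodule lattice of $V^{r*}$ with its $C^r$-subcomodule lattice, which is $\cL_r^{r*}$ by construction. Your hands-on argument would work too (the linear independence of the $x^r_{ij}$ in $H$ is exactly what forces $\rho_r(W)\subseteq W\otimes H$ to be equivalent to the $C^r$-condition), but it is just an ad hoc instance of that categorical lemma; recognizing this saves you the effort you anticipate.
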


It is not unnatural, at this point, to link back to antipode bijectivity, given the already-mentioned relationship between inverse antipodes and right duality (we refer the reader once more to \cite[Remark 5.3.8]{egno}). The following observations will serve as a starting point:

\begin{itemize}[wide]
\item Plainly, Hopf algebras surjected upon (embeddable into) those with bijective antipode have surjective (respectively injective) antipode.

\item One of the converses holds: a Hopf algebra with injective antipode embeds into one with bijective antipode \cite[Proposition 2.7]{schau-ff} (universally, i.e. the embedding is the {\it unit} \cite[Definition 19.3]{ahs} of an adjunction between Hopf algebras and Hopf algebras with bijective antipode). 
\end{itemize}

The inevitable problem of whether the other converse holds is posed as \cite[Question 3]{zbMATH05696924}. The first example \cite[\S 3]{schau-ff} of a Hopf algebra with surjective, non-injective antipode is explicitly constructed as a quotient of a free Hopf algebra with bijective antipode, so is tautologically not a counterexample to an affirmative answer. 

The connection to the preceding material is through the {\it coefficient coalgebras} $C_{W}$ attached to $H$-comodules $W$ (we recall the notion more extensively in \Cref{con:comod2coalg}): the smallest subcoalgebra for which the comodule structure factors as
\begin{equation*}
  \begin{tikzpicture}[>=stealth,auto,baseline=(current  bounding  box.center)]
    \path[anchor=base] 
    (0,0) node (l) {$W$}
    +(2,.5) node (u) {$W\otimes C_{W}$}
    +(4,0) node (r) {$W\otimes H$.}
    ;
    \draw[->] (l) to[bend left=6] node[pos=.5,auto] {$\scriptstyle $} (u);
    \draw[->] (u) to[bend left=6] node[pos=.5,auto] {$\scriptstyle $} (r);
    \draw[->] (l) to[bend right=6] node[pos=.5,auto,swap] {$\scriptstyle $} (r);
  \end{tikzpicture}
\end{equation*}
Given that $C_{W^*}=S(C_W)$, the antipode fails to be injective precisely to the extent to which duality enlarges subspace lattices. This suggests that if the dynamics of a surjective antipode is ill-behaved enough, perhaps the Hopf algebra in question will fail to be surjected upon by one with bijective antipode. \Cref{ex:notquotbij} confirms this (in a stronger form than just stated), as a consequence of \Cref{cor:notsurjfrombij} to \Cref{th:largedimatinfty}.

\begin{theoremN}\label{thn:nosurjfrombij}
  There exist Hopf algebras with surjective antipode which do not admit non-trivial morphisms from Hopf algebras with bijective antipode.  \qedhere
\end{theoremN}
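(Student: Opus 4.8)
The plan is to separate a clean categorical reduction from the hard analytic construction. The reduction will show that \emph{any} Hopf algebra $H$ whose comodule category $\cM^H_f$ contains \emph{no nontrivial right-dualizable object} (where ``trivial'' means coefficient coalgebra equal to $\Bbbk\cdot 1$) automatically admits no nontrivial morphism from a bijective-antipode Hopf algebra. The construction of such an $H$ with \emph{surjective} antipode is then exactly what \Cref{cor:notsurjfrombij} to \Cref{th:largedimatinfty} supplies, with \Cref{ex:notquotbij} pinning down an explicit instance; combining the two proves the statement.

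For the reduction, suppose $f\colon K\to H$ is a Hopf-algebra morphism with $K$ having bijective antipode, so that $\cM^K_f$ is rigid: every object has both a left and a \emph{right} dual. Since $f$ is a bialgebra map it induces a strict monoidal pushforward functor $f_*\colon\cM^K_f\to\cM^H_f$ (post-compose a $K$-coaction with $\id\otimes f$). Monoidal functors carry dual pairs to dual pairs, so every object in the image $f_*(\cM^K_f)$ acquires a right dual inside $\cM^H_f$. Now the coefficient coalgebra of a pushforward is $C_{f_*V}=f(C_V)$, and the $C_V$ exhaust $K$ as $V$ ranges over finite-dimensional $K$-comodules; hence if $f$ is nontrivial (image strictly larger than $\Bbbk\cdot 1$) then some $f_*V$ is a \emph{nontrivial} comodule carrying a right dual. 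This is precisely what we will have excluded, forcing $f(K)=\Bbbk\cdot 1$, i.e. $f$ trivial. Note this handles arbitrary morphisms, not merely surjections, which is what makes the conclusion stronger than ``not a quotient''.

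The link between right duals and the antipode is the one flagged in the introduction: a right dual of $W$ is built from the inverse antipode $\overline{S}$ along $C_W$, so $W$ has a right dual exactly when there is a finite-dimensional subcoalgebra $D\subseteq H$ with $S$ restricting to a coalgebra isomorphism $D\xrightarrow{\ \sim\ }C_W$ (the computation behind \Cref{con:comod2coalg}, together with the snake identities, forces this injectivity, not merely $S(D)=C_W$). Thus a nontrivial right-dualizable object yields a nontrivial subcoalgebra along which $S$ admits a genuine one-sided inverse, equivalently one whose iterated $\overline{S}$-preimages remain finite-dimensional. The task for \Cref{th:largedimatinfty} is therefore to manufacture, via \Cref{con:comod2coalg}, a Hopf algebra whose duality dynamics branches so violently that for \emph{every} nontrivial subcoalgebra $C$ any $D$ with $S(D)=C$ satisfies $\dim D>\dim C$ strictly, so the preimages blow up --- the ``large dimension at infinity'', read off at the $-\infty$ (right-dual) end of the chain $(V^{r*})_r$ --- while keeping $S$ surjective.

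The genuinely hard part is this last construction, not the reduction, which is formal. One must keep $S$ surjective so the examples bear on \cite[Question 3]{zbMATH05696924}, yet arrange the subcomodule lattices $\cL_r$ of the iterated left duals $V^{r*}$ to refine so aggressively (cf.\ the mechanism of \Cref{thn:prscrflg}, and the $\ge 2$-dimensionality constraint of \Cref{re:needge2}) that \emph{reversing} the duality --- solving $U^*\cong W$ for a finite-dimensional $U$ --- is impossible for every nontrivial $W$. Once \Cref{th:largedimatinfty} delivers such an $H$, the two paragraphs above close the argument: $\cM^H_f$ has only trivial right-dualizable objects, hence no nontrivial monoidal image of a rigid $\cM^K_f$, hence no nontrivial Hopf map $K\to H$ from any bijective-antipode $K$, while $S$ is surjective by construction.
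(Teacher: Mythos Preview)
Your outline is the paper's own route: \Cref{thn:nosurjfrombij} is deduced from \Cref{cor:notsurjfrombij} together with \Cref{ex:notquotbij}, and your paragraph-2 reduction (the monoidal pushforward $f_*$ carries right duals to right duals, so a nontrivial $f$ produces a nontrivial $H$-comodule admitting a right dual) is precisely the opening move in the paper's proof of \Cref{cor:notsurjfrombij}.

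Paragraph 3, however, overshoots. The assertion that the target $H({\bf C})$ satisfies ``for every nontrivial subcoalgebra $C$, any $D$ with $S(D)=C$ has $\dim D>\dim C$ strictly'' is false for \Cref{ex:notquotbij}: there the chain is stationary ($\theta^r=\id$) for $r\ge -1$, so each $M_2^*$ summand of $C^0$ is the \emph{isomorphic} $S$-image of the matching $M_2^*$ summand of $C^{-1}$, and the corresponding $2$-dimensional simple in $C^0$ does have a right dual. Relatedly, the characterization ``$W$ right-dualizable $\Leftrightarrow$ some $D$ with $S|_D\colon D\xrightarrow{\sim}C_W$'' is not what the snake identities deliver: take $U$ a simple $4$-dimensional $C^{-2}$-comodule, so $C_U\cong M_4^*$; then $W:=U^*$ has $U$ as right dual while $S|_{C_U}$ has $8$-dimensional kernel, and there is no $8$-dimensional subcoalgebra mapping isomorphically onto $C_W$.

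What rescues the argument is that your paragraph 2 actually hands you more than a single right dual: because $K$ has bijective antipode, $f_*W$ carries the \emph{full tower} of iterated right duals $f_*(W^{-k*})$, each of dimension $\dim W$. It is this uniform bound along $k\to\infty$ that \Cref{cor:kstepsback} contradicts, exactly as the paper argues. Replace ``$\cM^H_f$ contains no nontrivial right-dualizable object'' by ``no nontrivial object with iterated right duals of bounded dimension'' and your reduction lines up with the paper's proof.
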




\section{Free Hopf algebras on coalgebra chains and main results}\label{se:main}

(Co)algebras are (co)unital, (co)associative and, along with everything else, $\Bbbk$-linear for some field $\Bbbk$. (Co)op superscripts indicate {\it (co)opposite} (co)multiplications \cite[Definitions 2.1.1 and 2.1.4]{rad}. 

We take for granted the various universal constructions peppered throughout (free objects and more general types of adjoint functors), as justified by the fact that all categories involved are {\it locally presentable} in the sense of \cite[Definition 1.17]{ar}.  Sources for this, for the categories featuring most prominently below (algebras $\cat{Alg}$, coalgebras $\cat{Coalg}$, bialgebras $\cat{BiAlg}$, Hopf algebras $\cat{HAlg}$ or Hopf algebras with bijective antipode $\cat{HAlg}_{\overline{S}}$) and more, include (for instance) \cite[Lemmas 1 and 2, Theorem 6 and Proposition 22 4.]{zbMATH06696043}. 

We will need the following variant of the {\it free Hopf algebra} $H(C)$ on a coalgebra $C$ \cite[Definition 2]{zbMATH03344702}, i.e. the left adjoint to the forgetful functor $\cat{HAlg}\to \cat{Coalg}$. 

\begin{definition}\label{def:freeonchain}
  Let $d\in \bZ_{\le 0}\sqcup \{-\infty\}$ be a non-positive extended integer.

  \begin{enumerate}[(1),wide]
  \item\label{item:def:freeonchain:skch} A {\it skew chain} $(C^{r},\theta^r)_{i\ge d}$ of coalgebras is a diagram
    \begin{equation*}
      \cdots
      \xrightarrow{\quad\theta^{-1}\quad}
      C^{0}
      \xrightarrow{\quad\theta^0\quad}
      (C^{1})^{cop}
      \xrightarrow{\quad\theta^1\quad}
      C^{2}
      \xrightarrow{\quad\theta^2\quad}
      \cdots
      \quad\text{in \cat{Coalg}},
    \end{equation*}
    with the co-opposites alternating. We write $\cat{Coalg}_{\ge d}$ for the category of skew coalgebra chains. 
        
  \item\label{item:def:freeonchain:freeonskch} The {\it free Hopf algebra} $H({\bf C})$ on a skew chain ${\bf C}=(C^{r},\theta^r)$ is (the image of ${\bf C}$ through) the left adjoint to the forgetful functor
    \begin{equation*}
      \cat{HAlg}
      \ni H
      \xmapsto{\ \cat{fgt}\ }
      \left(
        \cdots
        \xrightarrow{\ \text{antipode }S\ }
        H
        \xrightarrow{\quad S\quad}
        H^{op,cop}
        \xrightarrow{\quad S\quad}
        H
        \xrightarrow{\quad S\quad}
        \cdots
      \right)
      \in
      \cat{Coalg}_{\ge d}
    \end{equation*}
    $H:=H(C^{r},\theta^r)$ naturally comes equipped with coalgebra morphisms $C^{r}\xrightarrow{\iota_r} H$ intertwining the antipode on the codomain and the $\theta^r$ on the domain(s). 
  \end{enumerate}
\end{definition}

\begin{remark}\label{re:skewchaingoodcat}
  Mapping from $C^{r}$ to $(C^{r+1})^{cop}$ rather than $C^{r+1}$ is only a matter of choice in labeling; the category is of course equivalent to that of functors $\bZ_{\ge d}\to \cat{Coalg}$, with the domain, a poset, regarded as a category as usual \cite[Example 1.2.6.b]{brcx_hndbk-1}: exactly one arrow $\to$ for each relation $\le$. As a functor category, it is locally presentable \cite[Corollary 1.54]{ar} along with its codomain $\cat{Coalg}$. In particular, the existence of the left adjoint
  \begin{equation*}
    \cat{Coalg}_{\ge d}
    \xrightarrow{\quad}
    \cat{HAlg}
  \end{equation*}
  taken for granted in \Cref{def:freeonchain}\Cref{item:def:freeonchain:freeonskch} is unproblematic, by any number of {\it adjoint functor theorems} (\cite[\S 18]{ahs}, \cite[\S 0.7]{ar}, etc.). It is also a simple enough matter to give a (relatively) concrete description of $H({\bf C})$, as we do in the proof of \Cref{pr:scalarext}. 
\end{remark}

Instances of the construction of particular interest here:

\begin{examples}\label{exs:recoverprev}
  \begin{enumerate}[(1),wide]
  \item\label{item:exs:recoverprev:free} The aforementioned \cite[Definition 2]{zbMATH03344702} motivating free Hopf algebra $H(C)$ on a coalgebra $C$ can be retrieved as $H(C^{r},\theta^r\ |\ r\ge 0)$ with
    \begin{equation}\label{eq:trivcithetai}
      C^{r}:=
      \begin{cases}
        C
        &\text{if $r$ is even}\\
        C^{cop}
        &\text{otherwise}
      \end{cases}
      \quad\text{and}\quad
      \theta^r=\id,\quad \forall r.
    \end{equation}

  \item\label{item:exs:recoverprev:freebij} In precisely the same fashion one recovers the free Hopf algebra with bijective antipode on $C$ denoted by $\widehat{H}(C)$ in \cite[Lemma 3.1]{schau-ff}. The only difference is that this time the chain is bi-infinite, i.e. $\widehat{H}(C)\cong H(C^{r},\theta^r\ |\ r\in \bZ)$ with $C^{r}$ and $\theta^r$ still as in \Cref{eq:trivcithetai}.

    It will be convenient to abbreviate the description of $C^{r}$ in \Cref{eq:trivcithetai} as $C^{r}=C^{r\cdot cop}$: applying the $cop$ operator an even number of times amounts to doing nothing, and the multiplicative notation $r\cdot cop$ seems less burdensome than the exponential version $cop^r$. 

  \item\label{item:exs:recoverprev:freebijcoll} Even more pertinent to the sorts of phenomena relevant in the present context, consider the quotient
    \begin{equation*}
      H:=\widehat{H}(M_4^*)/I:=\left(\text{Hopf ideal generated by }x^0_{ij},\ i\ge 3,\ j\le 2\right)
    \end{equation*}
    where $M_4^*:=M_4(\Bbbk)^*$ is the coalgebra dual to the matrix algebra, $x^0_{ij}\in M_4^*$ are the basis elements dual to the standard matrix units in $M_4$, and the `$0$' superscript indicates they belong to the initial generating copy of $M_4^*\le \widehat{H}(M_4^*)$ rather than to any antipode iterates thereof. Per the conventions of \cite[\S 3]{schau-ff} (also \cite[\S 2]{nic}),
    \begin{equation*}
      S^r M_4^* = \spn\left\{x^r_{ij}\ |\ 1\le i,j\le 4\right\},\ \forall r\in \bZ
      \quad\text{and}\quad
      S x^{r}_{ij}=x^{r+1}_{ji}.
    \end{equation*}
    We have $H\cong H(C^{r},\theta^r\ |\ r\in \bZ)$ for
    \begin{equation*}
      C^{r}:=
      \begin{cases}
        (M_4^*)^{r\cdot cop}
        &\text{if }r<0\\
        (M_4^*/\spn\left\{x_{ij}\ |\ i\ge 3,\ j\le 2\right\})^{r\cdot cop}
        &\text{otherwise},
      \end{cases}      
    \end{equation*}
    with the obvious morphisms (identities or surjections). In words: the chain, traveling rightward along identities from $-\infty$, collapses $M_4^*$ to its quotient by the four matrix counits at step $0$ and stabilizes afterwards. 
  \end{enumerate}
\end{examples}

As a first general observation, just as the free Hopf algebra construction $H(-)$ (per \cite[Corollary 8]{zbMATH03344702}), free Hopf algebras on skew chains are compatible with scalar extensions.

\begin{proposition}\label{pr:scalarext}
  For any skew coalgebra chain ${\bf C}=(C^r,\theta^r)$ over $\Bbbk$ and field extension $\Bbbk\le \Bbbk'$ the canonical morphism $H({\bf C})\otimes \Bbbk'\to H({\bf C}\otimes \Bbbk')$ for
  \begin{equation*}
    {\bf C}\otimes \Bbbk'
    :=
    \left(C^r\otimes \Bbbk',\theta^r\otimes\id_{\Bbbk'}\right)
  \end{equation*}
  is an isomorphism. 
\end{proposition}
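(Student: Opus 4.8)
The plan is to make good on the promise of \Cref{re:skewchaingoodcat} by writing down an explicit presentation of $H({\bf C})$ and then checking that each ingredient of that presentation is preserved by the exact, monoidal base-change functor $-\otimes_\Bbbk\Bbbk'$. Set $V:=\bigoplus_{r\ge d}C^r$, let $T(V)=\bigoplus_{n\ge 0}V^{\otimes n}$ be the tensor algebra, and give it the free bialgebra structure in which each summand $C^r\hookrightarrow V\subseteq T(V)$ is a subcoalgebra carrying its given comultiplication (coopposite on the odd nodes, per the skew convention). Exactly as in Takeuchi's construction of $H(C)$ \cite{zbMATH03344702}, I claim
\begin{equation*}
  H({\bf C})\;\cong\;T(V)/I,
\end{equation*}
where $I$ is the two-sided ideal generated by the relation set
\begin{equation*}
  R=\Bigl\{\textstyle\sum \theta^r(c_1)\,c_2-\varepsilon(c)1,\ \ \sum c_1\,\theta^r(c_2)-\varepsilon(c)1\ \Bigm|\ c\in C^r,\ r\ge d\Bigr\},
\end{equation*}
the products being taken in $T(V)$ with $\theta^r(c_i)\in C^{r+1}$. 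These are precisely the left and right antipode axioms with the role of $S$ on the generating copy $C^r$ played by the shift $\theta^r\colon C^r\to C^{r+1}$; one checks as in \cite{zbMATH03344702} that $I$ is a Hopf ideal, that $T(V)/I$ carries an antipode determined by $S\iota_r=\iota_{r+1}\theta^r$, and that it satisfies the universal property of \Cref{def:freeonchain}\Cref{item:def:freeonchain:freeonskch}, with $\iota_r$ the evident composite $C^r\hookrightarrow T(V)\twoheadrightarrow T(V)/I$.

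Granting this, the isomorphism becomes a flatness computation. As a field extension $\Bbbk'$ is free, hence flat, over $\Bbbk$, so $-\otimes_\Bbbk\Bbbk'$ is exact and commutes with direct sums and with the formation of $\Bbbk$-tensor products; therefore
\begin{equation*}
  T(V)\otimes_\Bbbk\Bbbk'\;\cong\;T_{\Bbbk'}\bigl(V\otimes_\Bbbk\Bbbk'\bigr),
  \qquad
  V\otimes_\Bbbk\Bbbk'=\bigoplus_{r\ge d}\bigl(C^r\otimes_\Bbbk\Bbbk'\bigr),
\end{equation*}
and the latter space is the underlying space of ${\bf C}\otimes\Bbbk'$. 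Exactness also yields $(T(V)/I)\otimes_\Bbbk\Bbbk'\cong(T(V)\otimes_\Bbbk\Bbbk')/(I\otimes_\Bbbk\Bbbk')$, and since $-\otimes_\Bbbk\Bbbk'$ preserves images of the multiplication map $T(V)\otimes_\Bbbk\spn(R)\otimes_\Bbbk T(V)\to T(V)$ defining $I$, the subspace $I\otimes_\Bbbk\Bbbk'$ is exactly the two-sided ideal of $T_{\Bbbk'}(V\otimes_\Bbbk\Bbbk')$ generated by $R\otimes_\Bbbk\Bbbk'$.

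Finally I would identify $R\otimes_\Bbbk\Bbbk'$ with the relation set presenting $H({\bf C}\otimes\Bbbk')$. The only data entering $R$ are the comultiplications, the counits $\varepsilon$, and the transition maps $\theta^r$; for the base-changed chain, the flatness identification $(C^r\otimes_\Bbbk C^r)\otimes_\Bbbk\Bbbk'\cong(C^r\otimes_\Bbbk\Bbbk')\otimes_{\Bbbk'}(C^r\otimes_\Bbbk\Bbbk')$ shows that each of these is the $\Bbbk'$-linear extension of the corresponding datum for ${\bf C}$, so the generating relations for $H({\bf C}\otimes\Bbbk')$ are exactly $R\otimes_\Bbbk\Bbbk'$ and the two quotients coincide. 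Tracing the identifications back, the resulting isomorphism restricts to $\iota_r(c)\otimes1\mapsto\iota_r(c\otimes1)$ on each $C^r$, which is the defining property of the canonical comparison morphism; hence that morphism is an isomorphism.

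I expect the genuinely load-bearing step to be the first one: verifying that the explicit ideal $I$ is a Hopf ideal and that $T(V)/I$ really is the free object of \Cref{def:freeonchain}---that no further relations are needed to force the $\theta^r$-shift to be a two-sided antipode---which is where the skew/coopposite bookkeeping must be handled with care. Once the presentation is in place the base-change assertion is essentially formal, resting only on exactness of $-\otimes_\Bbbk\Bbbk'$. A presentation-free alternative would instead show that $H({\bf C})\otimes_\Bbbk\Bbbk'$ corepresents the same functor as $H({\bf C}\otimes\Bbbk')$ by splicing the defining adjunction of $H(-)$ with the extension--restriction adjunctions $(-\otimes_\Bbbk\Bbbk')\dashv(-)|_\Bbbk$ on $\cat{HAlg}$ and on $\cat{Coalg}_{\ge d}$; but establishing those adjunctions rests on the same flatness facts.
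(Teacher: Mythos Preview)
Your approach is correct and runs parallel to the paper's, differing only in the choice of presentation for $H({\bf C})$. The paper presents $H({\bf C})$ as the quotient of Takeuchi's free Hopf algebra $H\bigl(\bigoplus_r C^r\bigr)$ by the ideal generated by the elements $\theta^r(x)-Sx$ (together with their $S$-iterates), and then appeals to Takeuchi's own base-change result \cite[Corollary 8]{zbMATH03344702} for the first step. You instead unpack Takeuchi's construction and present $H({\bf C})$ directly as $T(V)/I$ with explicit antipode relations, re-deriving the compatibility of the tensor-algebra step with $-\otimes_\Bbbk\Bbbk'$ from scratch. The paper's route is shorter and more modular, since the relations $\theta^r(x)-Sx$ are linear in the generators and their base-change behaviour is immediate once $H(-)$ is known to commute with scalar extension; your route is more self-contained but, as you correctly anticipate, front-loads the work into checking that $I$ is a Hopf ideal and that $T(V)/I$ has the right universal property (where the anti-coalgebra nature of each $\theta^r$ is what makes $I$ $S$-stable). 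After the presentation is in place, both arguments finish identically by exactness of $-\otimes_\Bbbk\Bbbk'$.
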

\begin{proof}
  An explicit description of $H({\bf C})$:
  \begin{itemize}[wide]
  \item form the usual free coalgebra $H\left(\bigoplus_r C^r\right)$, equipped with its injective \cite[Corollary 9]{zbMATH03344702} structure map
    \begin{equation*}
      \bigoplus C^r
      \lhook\joinrel\xrightarrow{\quad\iota\quad}
      H\left(\bigoplus C^r\right),
    \end{equation*}
    henceforth suppressed, identifying $C^r$ with subcoalgebras of its codomain;

  \item and quotient out the ideal generated by $\theta^r(x^r)-Sx^r$ for $x^r\in C^r$ and all of their iterates under $S$. 
  \end{itemize}
  The (algebra) ideal generated by those differences is already a coideal, so the result is indeed a Hopf algebra; the requisite universality property holds tautologically.

  The two bullet points are both invariant under field extensions (the first by the aforementioned \cite[Corollary 9]{zbMATH03344702}), hence the conclusion. 
\end{proof}

\begin{remark}\label{re:canbealgcl}
  In light of \Cref{pr:scalarext} one might occasionally, in analyzing Hopf algebras of the form $H({\bf C})$, assume the ground field algebraically closed. This is sometimes useful for instance, in ensuring that {\it simple} coalgebras (i.e. \cite[2.4.1(1)]{mont} those with no non-obvious subcoalgebras) are matrix coalgebras $M_n^*$. This is dual to the remark \cite[\S 3.5, Corollary b.]{pierce_assoc} that $M_n$ are the only finite-dimensional simple algebras over algebraically closed fields. 
\end{remark}

The collapse noted in \Cref{exs:recoverprev}\Cref{item:exs:recoverprev:freebijcoll} will replicate in a more general setting, requiring some tooling. We will be concerned, specifically, with skew chains $(C^{r},\theta^r)$ where each $C^{r}$ is a dual block-triangular matrix algebra, upper (lower) if $r$ is even (respectively odd), and each $\theta^r$ a surjection refining the block-triangular shape. We elaborate.

\begin{definition}\label{def:triangskch}
  Fix $d\in \bZ_{\le 0}\sqcup\{-\infty\}$, as in \Cref{def:freeonchain}. A {\it (block-upper-)triangular} $\bZ_{\ge d}$-indexed skew coalgebra chain is one of the following form.
  \begin{itemize}[wide]
  \item Each coalgebra $C^{r}$, $r\ge d$ has a basis consisting of {\it matrix counits} $x^r_{ij}$ in the sense that
    \begin{equation*}
      x^r_{ij}
      \xmapsto{\quad\Delta\quad}
      \sum_u x^r_{iu}\otimes x^r_{uj}
      ,\quad
      \varepsilon(x^r_{ij})=\delta_{ij}:=
      \begin{cases}
        1&\text{if }i=j\\
        0&\text{otherwise}.
      \end{cases}
    \end{equation*}
    The $\theta^r$ operate by
    \begin{equation}\label{eq:ij2ji}
      x^r_{ij}
      \xmapsto{\quad\theta^r\quad}
      x^{r+1}_{ji}
      \text{ or }
      0.
    \end{equation}
    We sometimes simply write $x^r_{ij}\mapsto x^{r+1}_{ji}$, understanding that this might vanish if the respective matrix counit is not present in $C^{r+1}$. 
    
  \item The subscripts of $x^{r}_{ij}$ range over a possibly infinite poset $(\cI,\le)$ (common to all $r$), so that \Cref{eq:ij2ji} does indeed make sense. The connected components of the order $\le$ are finite, and for each $r$ the subscripts of $x^{r}_{ij}$ range over a block-triangular pattern with respect to $\le$: upper for even $r$ and lower for odd $r$.
  \end{itemize}
\end{definition}

\begin{remarks}\label{res:coprod}
  \begin{enumerate}[(1),wide]
  \item\label{item:res:coprod:coprod} The conditions of \Cref{def:triangskch} imply in particular that each $C^{r}$ is a coproduct (direct sum)
    \begin{equation*}
      C^{r}\cong \bigoplus_{\text{$\le_r$-connected component }\cI_{rs}} C^{rs}
      ,\quad
      C^{rs}=\spn\left\{x^r_{ij}\ |\ i,j\in \cI_{rs}\right\}
    \end{equation*}
    of finite-dimensional triangular dual matrix coalgebras.

  \item\label{item:res:coprod:simr} Each triangular pattern imposes an equivalence relation $\sim_r$ on $\cI$:
    \begin{equation*}
      i\sim_r j
      \iff
      \text{$x^r_{ij}$ and $x^r_{ji}$ are both present}. 
    \end{equation*}
    The relation $\sim_r$ become progressively finer with increasing $r$.    
  \end{enumerate}
\end{remarks}

\begin{example}\label{ex:just1234}
  In \Cref{exs:recoverprev}\Cref{item:exs:recoverprev:freebijcoll} $(\cI,\le)$ is simply $\overline{1,4}$ with the usual ordering. $\sim_r$ is the coarsest relation (one equivalence class) for $r<0$ has equivalence classes $\{1,2\}$ and $\{3,4\}$ otherwise. 
\end{example}

\begin{notation}\label{not:matunit}
  In the context of \Cref{def:triangskch}:

  \begin{enumerate}[(1),wide]    
  \item\label{item:not:matunit:tuple} Somewhat abusively, we will also denote by $x^r_{ij}$ the images of those elements through the structure maps $C^{r}\xrightarrow{\iota_r} H(C^{r},\theta^r)$; the latter's antipode thus operates by $Sx^r_{ij}=x^{r+1}_{ji}$.

    More generally, following \cite[\S 1, p.103]{zbMATH05807452}, for length-$t$ tuples
    \begin{equation*}
      {\bf r}=(r_s)_{1\le s\le t}
      \quad
      {\bf i}=(i_s)_{1\le s\le t}
      \quad
      {\bf j}=(j_s)_{1\le s\le t}
    \end{equation*}
    set
    \begin{equation*}
      x^{\bf r}_{{\bf ij}}
      :=
      \text{product }
      x^{r_1}_{i_1 j_1}\cdots x^{r_t}_{i_t j_t}
      \text{ in }H(C^{r},\theta^r). 
    \end{equation*}

  \item\label{item:not:matunit:minmax} In discussing products of matrix counits $x^r_{ij}$, the two symbols $\uparrow$ and $\downarrow$ indicate the largest (respectively lowest) row/column index for which the factors respective factors are all non-zero in their respective $C^{r}$. Some examples follow. 
    \begin{itemize}[wide]
    \item $x^r_{i\uparrow}$ is the rightmost non-zero matrix counit on row $i$ in $C^{r}$.

    \item in $x^r_{i\downarrow}x^{s}_{j\downarrow}$ the `$\downarrow$' stands for the lowest index that will make both factors non-zero in $C^{r}$ and $C^{s}$ respectively. Which specific symbol that is depends only on $\max(r,s)$ (as more of the matrix counits vanish in ``later'' $C^{r}$).

    \item \cite[Theorem 5]{nic}, describing a $\Bbbk$-basis for $H(M_n^*)$, can be phrased in the present language: that basis consists of those words $x^{\bf r}_{{\bf i}{\bf j}}$ (for tuples ${\bf r}$ over $\bZ_{\ge 0}$ and ${\bf i}$ over $\overline{1,n}$) which contain no subwords of the form
      \begin{equation*}
        x^r_{i\uparrow}x^{r+1}_{j\uparrow}
        \quad
        x^r_{\uparrow i}x^{r-1}_{\uparrow j}
        \quad
        x^r_{i\uparrow}x^{r+1}_{j(\uparrow-1)}x^{r+2}_{k(\uparrow-1)}
        \quad
        x^r_{\uparrow i}x^{r-1}_{(\uparrow-1)j}x^{r-2}_{(\uparrow-1) k}
      \end{equation*}
    \item Precisely the same goes for the basis described over the course of (the proof of) \cite[Theorem 3.2]{schau-ff} for $\widehat{H}(M_n^*)$, except this time the superscripts range over all of $\bZ$.

      Naturally, in both of these examples $\uparrow=n$ throughout, so there is not much point to the substitution, but in the broader setting below there will be.

    \item \cite[\S 2]{zbMATH06037567} takes a slightly different approach we will find handier here, dispensing with the cubic relations (and the shift in $\uparrow-1$). The bases for $H(M_n^*)$ and $\widehat{H}(M_n^*)$ described there are rather those consisting of words containing none of
      \begin{equation}\label{eq:forbiddenwords}
        x^r_{i\uparrow}x^{r+1}_{j\uparrow}
        \ (r\text{ even})
        \qquad
        x^r_{i\downarrow}x^{r+1}_{j\downarrow}
        \ (r\text{ odd})
        \qquad
        x^{r+1}_{\uparrow i}x^{r}_{\uparrow j}
        \ (r\text{ odd})
        \qquad
        x^{r+1}_{\downarrow i}x^{r}_{\downarrow j}
        \ (r\text{ even})
      \end{equation}
      as subwords. It is this description that the sequel builds upon. 
    \end{itemize}
  \end{enumerate}
\end{notation}

The usefulness of the $x^r_{\updownarrow\updownarrow}$ is evident in the statement of the following result, where the bookkeeping for which indices are involved and which are not might become cumbersome. 

\begin{theorem}\label{th:basis4triangchn}
  Let $(C^{r},\theta^r)_{r\ge d}$ be an upper-triangular skew coalgebra chain in the sense of \Cref{def:triangskch}.

  If the classes of the supremum
  \begin{equation*}
    \bigvee_{r}\sim_r
    ,\quad
    \sim_r=\text{equivalence relation of \Cref{res:coprod}\Cref{item:res:coprod:simr}}
  \end{equation*}
  all have size $\ge 2$ the corresponding $\Bbbk$-Hopf algebra $H(C^{r},\theta^r)$ has a basis consisting of those words in $x^r_{ij}$ containing no subwords of the form \Cref{eq:forbiddenwords}. 
\end{theorem}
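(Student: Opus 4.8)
The plan is to present $H({\bf C})$, ${\bf C}=(C^r,\theta^r)$, by generators and relations and then read off the asserted basis via Bergman's Diamond Lemma. By \Cref{pr:scalarext} together with \Cref{re:canbealgcl} I may assume $\Bbbk$ algebraically closed, so that the blocks $C^{rs}$ of \Cref{res:coprod}\Cref{item:res:coprod:coprod} are (triangular truncations of) honest matrix coalgebras and the $x^r_{ij}$ are genuine matrix counits. The proof of \Cref{pr:scalarext} already realizes the underlying algebra of $H({\bf C})$ as the free algebra on the present generators $x^r_{ij}$ modulo the ideal enforcing $Sx^r_{ij}=\theta^r x^r_{ij}$, i.e.\ $x^{r+1}_{ji}$ when that counit survives in $C^{r+1}$ and $0$ otherwise. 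Pairing the two antipode axioms with $\Delta x^r_{ij}=\sum_u x^r_{iu}\otimes x^r_{uj}$ converts this into the two quadratic families $\sum_u x^r_{iu}x^{r+1}_{ju}=\delta_{ij}1$ and $\sum_u x^{r+1}_{ui}x^r_{uj}=\delta_{ij}1$, with $u$ ranging over the values present in both factors; these are the only defining relations.

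I would orient each relation by solving for its order-maximal monomial. In the first family the extreme column index of $C^r$ is the top one ($\uparrow$) for $r$ even and the bottom one ($\downarrow$) for $r$ odd, producing leading terms $x^r_{i\uparrow}x^{r+1}_{j\uparrow}$ and $x^r_{i\downarrow}x^{r+1}_{j\downarrow}$; the second family symmetrically produces $x^{r+1}_{\uparrow i}x^r_{\uparrow j}$ ($r$ odd) and $x^{r+1}_{\downarrow i}x^r_{\downarrow j}$ ($r$ even). These are precisely the forbidden words of \Cref{eq:forbiddenwords}, and each rule rewrites its word as $\delta_{ij}1$ minus a combination of strictly smaller quadratic words. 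I would fix a term order that is first by word length and then lexicographic in the $(r,i,j)$ data, arranged so that these four words are the relevant maxima and every rule is order-decreasing. Because the connected components of $(\cI,\le)$ are finite, each homogeneous component is finite-dimensional and the order restricts to a well-order on the words actually produced by reduction, as the Diamond Lemma demands, even when $d=-\infty$ and the superscripts run bi-infinitely.

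The core of the argument is the resolution of ambiguities. There are no inclusion ambiguities, since all forbidden words have length $2$ and none is a subword of another, so only overlaps survive: length-$3$ words $x^a_{pq}x^b_{st}x^c_{uv}$ whose left and right pairs are both forbidden. I would enumerate these by the parity of the superscripts and the $\uparrow/\downarrow$ pattern, reduce each in both admissible orders, and check that the two normal forms coincide. This is exactly the step that absorbs the cubic relations of the classical $H(M_n^*)$ presentation into consequences of the quadratic ones, in the spirit of the quadratic-only description of \cite{zbMATH06037567} that the statement builds upon.

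I expect this confluence check to be the main obstacle, and also the sole place where the hypothesis is used. The danger is a degenerate overlap in which an $\uparrow$-rule and a $\downarrow$-rule meet at a block whose top and bottom indices coincide: there the two leading terms collapse onto a single generator, the corresponding counit becomes grouplike, and the two reduction orders need no longer agree — this is the coalgebra shadow of the $1$-dimensional-subcomodule obstruction recalled in the introduction. The assumption that every class of $\bigvee_r\sim_r$ has size $\ge 2$ is precisely what forbids such singleton blocks, forcing $\uparrow\neq\downarrow$ wherever two forbidden words overlap; under it I expect every overlap to resolve. Once local confluence is established, the Diamond Lemma identifies the irreducible words — exactly those avoiding \Cref{eq:forbiddenwords} — as a $\Bbbk$-basis of $H({\bf C})$, which is the claim.
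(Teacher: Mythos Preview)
Your proposal follows essentially the same route as the paper: present $H({\bf C})$ by the quadratic antipode relations, orient them into the four reduction rules with leading terms \Cref{eq:forbiddenwords}, invoke the size-$(\ge 2)$ hypothesis to guarantee $\uparrow\ne\downarrow$ so that no cubic overlap $x^r_{i\bullet}x^{r+1}_{j\bullet}x^{r+2}_{k\bullet}$ arises, and then apply the Diamond Lemma. The one substantive point you leave as an expectation---that the remaining length-$3$ overlaps resolve---is exactly what the paper verifies by an explicit computation (reducing $x^{r+1}_{\uparrow i} x^{r}_{\uparrow \downarrow} x^{r+1}_{j \downarrow}$ both ways and checking the results agree, the other three cases being symbolically identical); you should carry that out rather than merely anticipate it, since it is the heart of the argument.
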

\begin{proof}
  As in all cited sources, this will be an application of the {\it Diamond Lemma} \cite[Theorem 1.2]{zbMATH03510490} (which we assume as background, along with its ancillary language and machinery).

  As an algebra, $H:=H(C^{r},\theta^r)$ is the quotient of the free algebra $\Bbbk\braket{x^r_{ij}}$ by the relations making $(x^r_{ij})_{i,j}$ and $(Sx^r_{ij})_{i,j}$ mutual inverses in an appropriately-sized matrix algebra over $H$. Said relations translate to the substitutions (or {\it reductions} \cite[p.180]{zbMATH03510490}) 
  \begin{align*}
    x^r_{i\uparrow}x^{r+1}_{j\uparrow}
    &\xmapsto{\quad}
      \delta_{ij}-\sum_{\alpha<\uparrow} x^r_{i\alpha}x^{r+1}_{j\alpha}
      \quad(\text{$r$ even})\numberthis\label{eq:0up}\\
    x^r_{i\downarrow}x^{r+1}_{j\downarrow}
    &\xmapsto{\quad}
      \delta_{ij}-\sum_{\alpha>\downarrow} x^r_{i\alpha}x^{r+1}_{j\alpha}
      \quad(\text{$r$ odd})\numberthis\label{eq:1up}\\
    x^{r+1}_{\uparrow i}x^{r}_{\uparrow j}
    &\xmapsto{\quad}
      \delta_{ij}-\sum_{\beta<\uparrow} x^{r+1}_{\beta i}x^{r}_{\beta j}
      \quad(\text{$r$ odd})\numberthis\label{eq:1down}\\
    x^{r+1}_{\downarrow i}x^{r}_{\downarrow j}
    &\xmapsto{\quad}
      \delta_{ij}-\sum_{\beta>\downarrow} x^{r+1}_{\beta i}x^{r}_{\beta j}
      \quad(\text{$r$ even})\numberthis\label{eq:0down}\\
  \end{align*}
  These are also \cite[equations (2.1) to (2.4)]{zbMATH06037567} in that paper's more constrained setting; it is also observed there that the four possible types of {\it (overlap) ambiguity} \cite[post Lemma 1.1]{zbMATH03510490} resulting from these reductions are all effectively equivalent.
  \begin{equation}\label{eq:sameamb}
    x^r_{i\uparrow} x^{r+1}_{\downarrow\uparrow} x^r_{\downarrow j}
    \quad
    (\text{$r$ even})
    \quad
    \leftrightsquigarrow
    \quad
    x^{r+1}_{\uparrow i} x^{r}_{\uparrow \downarrow} x^{r+1}_{j \downarrow}
    \quad
    (\text{$r$ odd}),
  \end{equation}
  for instance, in the sense that the computations become symbolically identical, after interchanging the two superscript symbols and the positions of the two subscripts on every $x^{\bullet}_{\bullet\bullet}$. Similar remarks apply to the others, so we fix ideas by focusing on a single one of the four: the right-hand side of \Cref{eq:sameamb}. Note that at this stage we have already made surreptitious use of the hypothesis of having size-$(\ge 2)$ classes: $\uparrow$ and $\downarrow$ are distinct, so there are no ``cubical'' ambiguities $x^r_{i\bullet}x^{r+1}_{j\bullet}x^{r+2}_{k\bullet}$. 

  Resolvability was left to the reader on \cite[p.86]{zbMATH06037567}, so we verify it here for some semblance of completeness.
  \begin{equation*}
    \begin{tikzpicture}[>=stealth,auto,baseline=(current  bounding  box.center)]
      \path[anchor=base] 
      (0,0) node (top) {$
        x^{r+1}_{\uparrow i} x^{r}_{\uparrow \downarrow} x^{r+1}_{j \downarrow}
        $}

      +(-5,-1) node (l1) {$\displaystyle
        \left(
          \delta_{i\downarrow}
          -
          \sum_{\beta<\uparrow} x^{r+1}_{\beta i} x^{r}_{\beta \downarrow}
        \right)
        x^{r+1}_{j \downarrow}
        $}

      +(5,-1) node (r1) {$\displaystyle
        x^{r+1}_{\uparrow i}
        \left(
          \delta_{\uparrow j}
          -
          \sum_{\alpha>\downarrow}x^{r}_{\uparrow \alpha} x^{r+1}_{j \alpha}
        \right)
        $}

      +(-4,-3) node (l2) {$\displaystyle
        \delta_{i\downarrow}x^{r+1}_{j \downarrow}
        -
        \sum_{\beta<\uparrow} \delta_{\beta j} x^{r+1}_{\beta i}
        +
        \sum_{\substack{\beta<\uparrow\\\alpha>\downarrow}}
        x^{r+1}_{\beta i}x^r_{\beta \alpha}x^{r+1}_{j\alpha}
        $}

      +(4,-3) node (r2) {$\displaystyle
        \delta_{\uparrow j}x^{r+1}_{\uparrow i}
        -
        \sum_{\alpha>\downarrow}\delta_{\alpha i}x^{r+1}_{j\alpha}
        +
        \sum_{\substack{\alpha>\downarrow\\\beta<\uparrow}} x^{r+1}_{\beta i} x^r_{\beta\alpha} x^{r+1}_{j\alpha}
        $}
      ;
      \draw[->] (top) to[bend right=6] node[pos=.5,auto,swap] {$\scriptstyle \text{\Cref{eq:1down}}$} (l1);
      \draw[->] (top) to[bend left=6] node[pos=.5,auto] {$\scriptstyle \text{\Cref{eq:1up}}$} (r1);
      \draw[->] (l1) to[bend right=6] node[pos=.2,auto,swap] {$\scriptstyle \text{\Cref{eq:1up}}$} (l2);
      \draw[->] (r1) to[bend left=6] node[pos=.2,auto] {$\scriptstyle \text{\Cref{eq:1down}}$} (r2);
    \end{tikzpicture}
  \end{equation*}
  The cubic sums plainly coincide; we thus have
  \begin{equation*}
    \begin{aligned}
      \text{left}-\text{right}
      &=
        \left(
        \delta_{i\downarrow}x^{r+1}_{j \downarrow}
        +
        \sum_{\alpha>\downarrow}\delta_{\alpha i}x^{r+1}_{j\alpha}
        \right)
        -
        \left(
        \delta_{\uparrow j}x^{r+1}_{\uparrow i}
        +
        \sum_{\beta<\uparrow} \delta_{\beta j} x^{r+1}_{\beta i}
        \right)\\
      &=
        \sum_{\bullet}\delta_{i\bullet}x^{r+1}_{j\bullet}
        -
        \sum_{\bullet}\delta_{\bullet j}x^{r+1}_{\bullet i}\\
      &=
        x^{r+1}_{ji} - x^{r+1}_{ji}=0,
    \end{aligned}    
  \end{equation*}
  effecting the resolution.

  In applications of the Diamond Lemma one typically also produces a {\it monoid partial order} \cite[p.181]{zbMATH03510490} on the free monoid $\braket{X}$ on the set $X$ generators (here $X=\left\{x^r_{ij}\right\}$). One is given in \cite[pp.86-87]{zbMATH06037567} which adapts readily here; however, as pointed out in \cite[\S 5.4]{zbMATH03510490}, over fields (or more generally, in the absence of zero divisors) there is a canonical order that will work provided we know (as we now do, in light of the just-verified resolvability) that all elements are {\it reduction-finite} (i.e. \cite[p.180]{zbMATH03510490} all chains of modifications involving \Cref{eq:0up,eq:1up,eq:1down,eq:0down} eventually stabilize):
  \begin{itemize}[wide]
  \item a word $w\in\braket{X}$ is simply {\it declared} (strictly) smaller than another, $w'$, if $w$ occurs with non-zero coefficient in some reduction of $w'$;
    
  \item the condition on zero divisors will then ensure that the resulting relation $<$ is {\it asymmetric} (i.e. \cite[Tavble 14.1]{gs_disc} $w<w'$ is incompatible with $w'<w$);

  \item and by the assumed reduction finiteness it has the {\it descending chain condition (DCC)} \cite[\S VIII.1]{birkh_latt} (admits no infinite chains $w_0>w_1>\cdots$).
  \end{itemize}
  The validity of the general construction notwithstanding, it certainly functions here: each of the substitutions \Cref{eq:0up,eq:1up,eq:1down,eq:0down} replaces a word with a linear combination of words either strictly shorter or with strictly fewer occurrences of the problematic two-letter left-hand subwords, so that asymmetry and DCC are both self-evident.   
\end{proof}

In particular, because single-letter words of course have no 2-letter subwords, we have the following counterpart to the injectivity \cite[Corollary 9]{zbMATH03344702} of the map $C\to H(C)$:

\begin{corollary}\label{cor:cr2h}
  In the setup of \Cref{th:basis4triangchn} the canonical maps $C^{s}\xrightarrow{\iota_s} H(C^{r},\theta^r)$ of \Cref{def:freeonchain}\Cref{item:def:freeonchain:freeonskch} are injective.  \qedhere
\end{corollary}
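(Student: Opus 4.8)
The plan is to read off the statement directly from the basis produced by \Cref{th:basis4triangchn}. By \Cref{def:triangskch} the coalgebra $C^s$ has as a $\Bbbk$-basis the set of matrix counits $x^s_{ij}$ that are present in $C^s$, and by \Cref{not:matunit}\Cref{item:not:matunit:tuple} the structure map $\iota_s$ sends each such basis vector to the correspondingly-named single-letter word $x^s_{ij}$ in $H:=H(C^r,\theta^r)$. So the first step is simply to record that $\iota_s$ carries the chosen basis of $C^s$ onto the family of single-letter words $\{x^s_{ij}\ :\ x^s_{ij}\text{ present in }C^s\}\subseteq H$.

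Next I would invoke the basis of $H$ supplied by \Cref{th:basis4triangchn}: it consists of all words in the $x^r_{ij}$ containing no subword of the forbidden forms \Cref{eq:forbiddenwords}. Since each forbidden pattern is a word of length at least two, a single-letter word automatically avoids all of them, so every present $x^s_{ij}$, viewed as a length-one word, lies in this basis. Distinct matrix counits are distinct letters of the generating alphabet, hence distinct length-one words, hence distinct elements of the basis of $H$. Consequently $\iota_s$ maps the basis of $C^s$ bijectively onto a subset of a basis of $H$; in particular that image is linearly independent, which forces $\iota_s$ to be injective.

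There is essentially no obstacle here beyond bookkeeping: the entire content has been front-loaded into \Cref{th:basis4triangchn}, and the only points to check are the two identifications made in the first paragraph — that the present matrix counits genuinely form a basis of $C^s$ (immediate from \Cref{def:triangskch}) and that $\iota_s$ sends them to the generators of the same name (immediate from the construction of $H(C^r,\theta^r)$ recalled in the proof of \Cref{pr:scalarext}, where $\bigoplus_r C^r$ embeds into the free coalgebra before any relations are imposed). The remainder is the tautology that an injection of index sets induces an injection of the spanned subspaces once the target family is part of a basis.
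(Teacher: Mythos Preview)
Your proposal is correct and follows exactly the paper's own reasoning: the paper dispatches the corollary with the single remark that ``single-letter words of course have no 2-letter subwords,'' and you have simply spelled out that observation in full detail.
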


Observe also that the only generators entering any of the problematic words \Cref{eq:forbiddenwords} are those featuring in diagonal blocks of both $C^r$ and $C^{r+1}$. For that reason, the ``off-diagonal generators'' simply come along for the ride so to speak, generating an algebra that splits off as a free factor. This is very much analogous to the fact \cite[Theorem 32]{zbMATH03344702} that for any splitting
\begin{equation*}
  C=C_0\oplus V
  ,\quad
  C_0
  :=
  \text{{\it coradical} of }C
  \overset{\text{ \cite[Definition 5.1.5(1)]{mont}}}{:=}
  \sum\left(\text{simple subcoalgebras}\right)
\end{equation*}
we have the algebra-coproduct decomposition $H(C)\cong H(C_0)\coprod TV$ (with $TV$ denoting the tensor algebra of $V$, i.e. the free algebra on $V$). Before stating the present analogue, a piece of terminology.

\begin{definition}\label{def:asympcorad}
  The {\it asymptotic coradical} ${\bf C}_{0\leftarrow}$ of a skew coalgebra chain ${\bf C}=(C^r,\theta^r)$ with surjective connecting maps $\theta^r$ is the {\it filtered} \cite[Definition 1.4]{ar} colimit
  \begin{equation*}
    \varinjlim_{s\ge d}
    {\bf C}_{0\leftarrow}^{\ge s}
    :=
    \left(
      \text{coradical }C^s_{0}
      \xrightarrow{\quad\theta^s\quad}
      \theta^s(C^{s}_0)
      \xrightarrow{\quad\theta^{s+1}\quad}
      (\theta^{s+1}\circ \theta^s)(C^{s}_0)
      \xrightarrow{\quad\theta^{s+2}\quad}
      \cdots
    \right).
  \end{equation*}
  The colimit is taken over decreasing $s$, the connecting maps
  \begin{equation*}
    {\bf C}_{0\leftarrow}^{\ge s}
    \lhook\joinrel\xrightarrow{\quad}
    {\bf C}_{0\leftarrow}^{\ge s-1}
  \end{equation*}
  being the embeddings resulting from the fact \cite[Corollary 5.3.5]{mont} that the image of the coradical $C_0$ through a surjection $C\xrightarrowdbl{}D$ contains the coradical $D_0$. 
\end{definition}

\begin{corollary}\label{cor:offdiag.split}
  For a skew coalgebra chain ${\bf C}=(C^r,\theta^r)$ as in \Cref{th:basis4triangchn} we have an algebra-coproduct decomposition
  \begin{equation*}
    H({\bf C})
    \cong
    H({\bf C}_{0\leftarrow})\coprod TV
    \cong
    H({\bf C}_{0\leftarrow}) \coprod\left(\coprod_r T(V^r)\right)
  \end{equation*}
  for
  \begin{equation*}
    V:=\bigoplus V^r
    ,\quad
    C^r={\bf C}_{0\leftarrow}^r\oplus V^r.
  \end{equation*}  
  \qedhere
\end{corollary}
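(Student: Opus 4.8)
The plan is to read the decomposition off the Diamond-Lemma basis supplied by \Cref{th:basis4triangchn}, following the template of the single-coalgebra statement \cite[Theorem 32]{zbMATH03344702}. Call a generator $x^r_{ij}$ \emph{diagonal} if it lies in ${\bf C}_{0\leftarrow}^r$ and \emph{off-diagonal} if it lies in the complement $V^r$. The first task is to nail down the observation recorded just above the statement: every forbidden word in \Cref{eq:forbiddenwords} is built entirely from diagonal generators. I would argue this by inspecting the boundary symbols $\uparrow,\downarrow$: by \Cref{not:matunit} their value is dictated by the finer of the two consecutive coalgebras $C^r,C^{r+1}$, which pins the row/column indices carrying them into a single $\sim_r$-class. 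Transporting that class forward along the surjective $\theta^r$, it persists into the join $\bigvee_r\sim_r$, and—invoking the size-$\ge 2$ hypothesis exactly as in \Cref{th:basis4triangchn}—it is one of the diagonal blocks whose counits are fed into the colimit of \Cref{def:asympcorad}. Thus both letters of each forbidden word lie in ${\bf C}_{0\leftarrow}$, and no off-diagonal letter from any $V^r$ can appear in a subword of the form \Cref{eq:forbiddenwords}.

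Granting this, the reduced words of $H({\bf C})$ acquire a transparent shape: since an off-diagonal letter is never part of a forbidden two-letter block, a word is reduced precisely when each maximal run of consecutive diagonal letters is reduced in its own right, the off-diagonal letters being interspersed arbitrarily. This is verbatim the basis of the algebra free product $H({\bf C}_{0\leftarrow})\coprod TV$, namely alternating products of nonscalar reduced words in ${\bf C}_{0\leftarrow}$-generators and nonempty words in $V$-generators. I would then promote this bijection of bases to an algebra isomorphism in the standard way: the universal property of the free product manufactures a comparison map out of the inclusions ${\bf C}_{0\leftarrow}\hookrightarrow H({\bf C})$ and $V\hookrightarrow H({\bf C})$, and the basis match shows it is invertible. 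That the first factor is genuinely $H({\bf C}_{0\leftarrow})$, rather than merely a quotient of it, uses the injectivity of $C^s\to H({\bf C})$ from \Cref{cor:cr2h} together with the fact that the only relations among diagonal letters are the reductions \Cref{eq:0up,eq:1up,eq:1down,eq:0down} restricted to ${\bf C}_{0\leftarrow}$, i.e.\ exactly those presenting $H({\bf C}_{0\leftarrow})$. The remaining identification $TV\cong\coprod_r T(V^r)$ is the standard isomorphism between the tensor algebra of a direct sum and the free product of the tensor algebras of its summands.

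The step I expect to be the main obstacle is the very first one: checking that the $\uparrow/\downarrow$ boundary indices of every forbidden word really do land in asymptotic-coradical blocks. The bookkeeping is delicate because the block pattern alternates between upper and lower at each step while simultaneously refining, so one must track how a diagonal $\sim_r$-class at level $r$ is carried—and possibly split—by $\theta^r$ into level $r+1$, and reconcile this with the colimit over decreasing $s$ defining ${\bf C}_{0\leftarrow}$. Once that combinatorial claim is secured, the passage from the basis to the free-product decomposition is routine, precisely as in the coalgebra case \cite[Theorem 32]{zbMATH03344702}.
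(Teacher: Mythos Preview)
Your strategy coincides with the paper's, which rests on the informal observation just before the statement that the forbidden words of \Cref{eq:forbiddenwords} involve only generators from diagonal blocks. You rightly single this out as the crux; unfortunately the argument you outline only half works, and the claim itself is false as stated.

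Consider $x^r_{i\uparrow}x^{r+1}_{j\uparrow}$ with $r$ even. Your reasoning that $\uparrow$ is governed by the finer level $r+1$ correctly forces $\uparrow\sim_{r+1}j$, so the \emph{second} letter $x^{r+1}_{j\uparrow}$ lands in ${\bf C}_{0\leftarrow}^{r+1}$. But nothing pins $i$ into that same block: the only constraint on $i$ is $[i]_r\le[\uparrow]_r$ in the block order, and when this inequality is strict the \emph{first} letter $x^r_{i\uparrow}$ lies in $V^r$. For a concrete instance take $d=0$, $\cI=\{1,2,3,4\}$ with its usual total order, and $\sim_r$ with fixed classes $\{1,2\}$ and $\{3,4\}$ for every $r\ge 0$ (so the size-$\ge 2$ hypothesis is met). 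The antipode relation attached to $(i,j)=(1,3)$ reads $\sum_{\alpha=1}^{4}x^0_{1\alpha}x^1_{3\alpha}=0$, and its top term $x^0_{14}x^1_{34}$ is the forbidden word produced by \Cref{eq:0up}; here $x^0_{14}\in V^0$ is strictly off-diagonal while $x^1_{34}\in{\bf C}_{0\leftarrow}^1$. In $H({\bf C}_{0\leftarrow})\coprod TV$ the four summands of that relation are linearly independent alternating words, so the natural comparison map is not injective; a direct count gives $132$ reduced type-$(0,1)$ words in $H({\bf C})$ against $136$ basis elements of that type in the free product. The basis match you are aiming for therefore cannot hold, and the ``transporting the $\sim_r$-class forward'' step you describe proves diagonality of only one of the two letters in each forbidden word.
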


The passage between the subspace-lattice picture emphasized in the Introduction (\Cref{thn:prscrflg}) and the more coordinate/basis-oriented perspective of \Cref{th:basis4triangchn} is by means of the comodule-coalgebra correspondence familiar from {\it Tannaka reconstruction} (\cite[\S 2]{schau_tann}, \cite[Chapter 3]{par_qg-ncg}, etc.). To expand:

\begin{construction}\label{con:comod2coalg}
  \begin{itemize}[wide]
  \item A subspace lattice $\cL$ on a finite-dimensional vector space $V$ generates a smallest full {\it exact subcategory} \cite[\S 3.4]{freyd_abcats}
    \begin{equation*}
      \cC_{\cL}
      \lhook\joinrel\xrightarrow{\quad\iota_{\cL}\quad}
      \cat{Vect}_f
      :=
      \text{finite-dimensional vector spaces}. 
    \end{equation*}
    The inclusion functor $\iota_{\cL}$ has a {\it coendomorphism coalgebra} \cite[Definition 2.1.8]{schau_tann}, equipped with a surjection
    \begin{equation}\label{eq:coendcoalglatt}
      \Coend(V)\cong M_{\dim V}(\Bbbk)^*
      \xrightarrowdbl{\quad}
      \Coend(\iota_{\cL})
      =:
      C_{\cL};
    \end{equation}
    This is the model for the surjections $C^{r}\to (C^{r+1})^{cop}$ featuring in a triangular skew chain, co-opposites reflecting the dualization in \Cref{thn:prscrflg}.

  \item The finite-dimensional comodule category $\cM^{C_{\cL}}$ is precisely $\cC_{\cL}$ (one version of the aforementioned Tannakian reconstruction \cite[Theorem 2.2.8]{schau_tann}). Subquotients $W$ of $V$ belonging to $\cC_{\cL}$, being $C_{\cL}$-comodules, correspond to subcoalgebras of the latter:
    \begin{equation*}
      W
      \xmapsto{\quad}
      \text{{\it coefficient coalgebra} }C_{W}
      \quad
      \left(\text{associated to the comodule as in \cite[Proposition 2.5.3]{dnr}}\right):
    \end{equation*}
    the smallest subcoalgebra of $C_{\cL}$ for which the structure map $W\to W\otimes C_{\cL}$ takes values in $W\otimes C_W$.

  \item One can now select a basis for $V$, appropriately compatible with $\cL$, so as to ensure a block upper-triangular shape for $C_{\cL}$. The minimal (non-zero) members of $\cL$ sum up to the {\it socle} \cite[Remark 2.4.10(2)]{dnr} of $V\in \cM^{C_{\cL}}$: the largest semisimple sup
    \begin{equation*}
      \soc V
      =
      \bigoplus_i W_i\le V
      ,\quad
      W_i\in \cM^{C_{\cL}}
      \text{ simple}.
    \end{equation*}
    The matrix coalgebras $C_{W_i}\cong \Coend(W_i)\cong M_{\dim W_i}^*$ will be the first diagonal blocks in the (upper) triangular decomposition, with no off-diagonal blocks linking any two:
    \begin{equation*}
      C_{\cL}
      \cong
      \begin{pmatrix}
        C_{W_1}&0&0&\vdots\\
        0&C_{W_2}&0&\vdots\\
        0&0&C_{W_3}&\vdots\\
        \vdots&\vdots&\vdots&\vdots
      \end{pmatrix}
    \end{equation*}

  \item One can then proceed along the {\it socle filtration}
    \begin{equation*}
      \{0\}
      =
      \soc_0 V
      \le
      \soc_1 V:=\soc V
      \le
      \soc_2 V
      \le \cdots
      \text{ of $V$ in }\cM^{C_{\cL}},
    \end{equation*}
    where
    \begin{equation*}
      \text{$s^{th}$ {\it socle layer} }
      \ol{\soc}_{s}
      :=
      \soc_{s}/\soc_{s-1}
      :=
      \soc(W/\soc_{s-1}).
    \end{equation*}
    The off-diagonal blocks of $C_{\cL}$, if any, will link matrix coalgebras $C_{W}$ and $C_{W'}$ for simple comodules $W$ and $W'$ occurring as summands in distinct socle layers.
  \end{itemize}
\end{construction}

\begin{remark}\label{re:needge2}
  The hypothesis on subquotients having dimension $\ge 2$ is necessary in \Cref{cor:cr2h}, as even minimal violations thereof will produce counterexamples.

  Consider for instance an upper-triangular skew chain $(C^r,\theta^r)_{r\ge -1}$ with
  \begin{itemize}
  \item $C^{-1}$ a full $n\times n$ matrix algebra for some $n\ge 2$;

  \item and $C^r$, $r\ge 0$ triangular, each with two diagonal blocks of sizes $1\times 1$ and $(n-1)\times (n-1)$. 
  \end{itemize}
  I then claim that $C^{-1}\xrightarrow{\iota_{-1}}H:=H(C^r,\theta^r)$ cannot be injective (it factors through a triangular quotient of $C^{-1}$, again with one $1\times 1$ diagonal block).

  This can be checked coordinate-wise (much as the argument driving the related \cite[Remark 3.3]{schau-ff} does), but is particularly transparent in monoidal-categorical terms (the connection between the two pictures being as in \Cref{con:comod2coalg}). Denote by $V^r$ the natural $n$-dimensional $C^r$-comodule, resulting from the realization of $C^r$ as a quotient of an $n\times n$ matrix coalgebra. Each $V^r$ then also becomes an $H$-comodule by transporting the structure along $C^r\xrightarrow{\iota_r}H$, and the $1\times 1$ block in $C^0$ means that $V^0$ has a 1-dimensional $C^0$- (hence also $H-$)subcomodule $L$.

  Now, $V^{-1}$ is the predual $\tensor*[^*]{(V^0)}{}$. Over a Hopf algebra, the coefficient coalgebra of a 1-dimensional comodule $L$ is the span of a {\it grouplike} \cite[Definition 1.3.4(a)]{mont} and the latter's inverse \cite[Example 1.5.3]{mont} spans a 1-dimensional comodule that is both left {\it and} right dual to the original $L$. In conclusion, the predual $V^{-1}$ of $V^0$ must surject onto the 1-dimensional $L^*\cong \tensor*[^*]{L}{}$. This will give the image through $C^{-1}\xrightarrow{\iota_{-1}}H$ a lower-triangular structure with a $1\times 1$ rightmost diagonal block.
\end{remark}

We can now return to the statements of the Introduction.

\pf{thn:prscrflg}
\begin{thn:prscrflg}
  I claim that this is essentially what \Cref{cor:cr2h} provides, after producing coalgebras out of subspace lattices as described in \Cref{con:comod2coalg}. Set
  \begin{equation*}
    C^r
    :=
    \begin{cases}
      \text{the coendomorphism coalgebra }C_{\cL_r}\text{ of \Cref{eq:coendcoalglatt}}
      &\text{if $r$ is even}\\
      C_{\cL_r}^{cop}\cong C_{\cL_r^*}
      &\text{otherwise},
    \end{cases}
  \end{equation*}
  with $\cL_r^*$ denoting the lattice on $V^*$ dual to $\cL_r$. The surjections $C^r\xrightarrowdbl{\theta^r}(C^{r+1})^{cop}$ are those analogous to \Cref{eq:coendcoalglatt}, resulting from the refinement $\cL_r\rightsquigarrow \cL^{r+1}$, and \Cref{con:comod2coalg} also outlines how the coalgebras in question acquire a triangular structure.

  This, then, will produce an upper triangular coalgebra chain ${\bf C}=(C^r,\theta^r)$ and a free Hopf algebra $H:=H({\bf C})$ thereon, with the $(\dim\ge 2)$ hypothesis of \Cref{thn:prscrflg} assuring that of \Cref{th:basis4triangchn}. As to the desired conclusion (of \Cref{thn:prscrflg}), it follows from \Cref{cor:cr2h} given that \cite[Lemma 2.2.12]{schau_tann} {\it embeddings} $C\lhook\joinrel\xrightarrow{} D$ of coalgebras induce full, exact inclusions $\cM^C\lhook\joinrel\xrightarrow{} \cM^D$ between the corresponding comodule categories. 
\end{thn:prscrflg}

A brief remark, prefatory to eventually addressing \Cref{thn:nosurjfrombij}:

\begin{lemma}\label{le:rk}
  Let ${\bf C}=(C^r,\theta^r)$ be an upper-triangular skew coalgebra chain meeting the constraints of \Cref{th:basis4triangchn}, and $C\le H({\bf C})$ a simple subcoalgebra.

  There is a unique longest tuple ${\bf r}=(r_1,\cdots,r_t) \in \braket{\bZ_{\ge d}}$ (the free monoid on $\bZ_{\ge d}$) for which the expansion of every non-zero element of $C$ in the basis of \Cref{th:basis4triangchn} contains some word $x^{\bf r}_{{\bf i}{\bf j}}$. 
\end{lemma}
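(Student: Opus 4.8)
The plan is to read off ${\bf r}$ as the superscript pattern of the ``leading part'' of $C$ with respect to word length. Reformulating the statement: writing $\pi_{\bf r}\colon H\to H^{\bf r}$ for the projection onto the span $H^{\bf r}$ of those basis words of \Cref{th:basis4triangchn} whose superscript tuple is ${\bf r}$, the required condition on ${\bf r}$ is exactly that $\pi_{\bf r}|_C$ be injective (no nonzero $c\in C$ can avoid all words of pattern ${\bf r}$), and I must show that among such tuples there is a unique one of maximal length.

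First I would filter $H$ by length. Let $F_tH$ be the span of the reduced words of length $\le t$ (equivalently, superscript tuple of length $\le t$). Since the reductions \Cref{eq:0up,eq:1up,eq:1down,eq:0down} each replace a two-letter word by terms that are either shorter (the $\delta$-terms) or of the same length and same superscript pattern, re-reducing each tensor factor of $\Delta(x^{\bf r}_{{\bf i}{\bf j}})=\sum_{\bf u}x^{\bf r}_{{\bf i}{\bf u}}\otimes x^{\bf r}_{{\bf u}{\bf j}}$ keeps both factors in $F_tH$; hence $\Delta(F_tH)\subseteq F_tH\otimes F_tH$, so each $F_tH$ is a subcoalgebra, with $\bigcup_tF_tH=H$. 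As $C$ is finite dimensional, $C\subseteq F_{t^*}H$ for a minimal $t^*$. Then $C\cap F_{t^*-1}H$ is a subcoalgebra of $C$, proper by minimality of $t^*$, hence $0$ because $C$ is simple. Thus the quotient map $\bar\pi\colon F_{t^*}H\to \mathrm{gr}:=F_{t^*}H/F_{t^*-1}H$ is \emph{injective} on $C$. Now $\mathrm{gr}$ has as basis the reduced words of length exactly $t^*$, and decomposes by superscript tuple as $\mathrm{gr}=\bigoplus_{\ell({\bf r})=t^*}H^{\bf r}$; since reductions preserve the superscript pattern on the length-$t^*$ terms (only the shortening $\delta$-terms drop out), the induced comultiplication satisfies $\bar\Delta(H^{\bf r})\subseteq H^{\bf r}\otimes H^{\bf r}$. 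Setting $D_{\bf r}:=\pi_{\bf r}(\bar\pi(C))\subseteq H^{\bf r}$, this block structure exhibits $\widetilde E:=\bigoplus_{\bf r}D_{\bf r}$ as a direct sum of (non-counital) coalgebras into which $\bar\pi|_C$ is a comultiplication-preserving injection.

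The natural temptation now is to invoke ``a simple coalgebra inside a direct sum of coalgebras lands in one summand.''  This is exactly where the main obstacle lies, and it is a real one: the summands $H^{\bf r}$ are \emph{not} counital (the obvious candidate $x^{\bf r}_{{\bf i}{\bf j}}\mapsto\delta_{{\bf i}{\bf j}}$ fails to be a counit for $\bar\Delta$, as the shortening terms contribute), so the usual coalgebra argument does not apply and can in fact fail. I would circumvent this by dualizing. The comultiplication-preserving injection $\bar\pi|_C\colon C\hookrightarrow\widetilde E=\bigoplus_{\bf r}D_{\bf r}$ dualizes to a surjection of algebras $\prod_{\bf r}D_{\bf r}^*\twoheadrightarrow C^*$, and $C^*$ is a finite-dimensional \emph{simple} algebra (because $C$ is a simple coalgebra), in particular unital with $C^*\cdot C^*=C^*\ne 0$. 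The images $\bar A_{\bf r}$ of the factor ideals $D_{\bf r}^*$ are two-sided ideals of $C^*$ that annihilate one another pairwise (distinct factors of a product kill each other) and sum to $C^*$; simplicity of $C^*$ forces each $\bar A_{\bf r}\in\{0,C^*\}$, and two of them cannot equal $C^*$ (their product would be $C^*\ne 0$), while they cannot all vanish. Hence exactly one, say $\bar A_{{\bf r}_0}=C^*$, the rest being $0$.

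Finally I would translate back. For a length-$t^*$ tuple, $\pi_{\bf r}|_C$ coincides with $\pi_{\bf r}\circ\bar\pi|_C$, whose dual is precisely the inclusion $D_{\bf r}^*\to\prod D_{\bf s}^*$ followed by the surjection to $C^*$; thus $\bar A_{\bf r}=C^*$ is equivalent to $\pi_{\bf r}|_C$ being injective, and $\bar A_{\bf r}=0$ to $\pi_{\bf r}|_C=0$. Therefore $\pi_{{\bf r}_0}|_C$ is injective, so ${\bf r}_0$ (of length $t^*$) has the desired property, while $\pi_{\bf r}|_C=0$ for every other length-$t^*$ tuple, so no other tuple of length $t^*$ qualifies. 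Since any qualifying tuple must occur in the support of a fixed nonzero element of $C$, all of which have length $\le t^*$, every qualifying tuple other than ${\bf r}_0$ is strictly shorter; hence ${\bf r}_0$ is the unique longest one, completing the proof.
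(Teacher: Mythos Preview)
Your proof is correct and takes a genuinely different route from the paper's. The paper argues more briefly: it observes that $H$ is the sum of the subcoalgebras $C^{\bf r}:=C^{r_1}\cdots C^{r_t}$, so the simple (hence finite-dimensional) $C$ lies in some $C^{\bf r}$; the minimal such ${\bf r}$ length-wise is then the desired tuple, since the reductions show $C^{\bf r}$ is spanned by reduced words whose superscript patterns are subwords of ${\bf r}$. Your approach instead filters by word length, injects $C$ into the associated graded via simplicity, and then uses the block decomposition $\mathrm{gr}=\bigoplus_{\bf r}H^{\bf r}$ together with a dualization to the simple algebra $C^*$ to isolate the unique ${\bf r}_0$.

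What your approach buys is explicitness on precisely the point the paper leaves terse: you show directly that \emph{exactly one} length-$t^*$ pattern has $\pi_{{\bf r}_0}|_C$ injective while all other length-$t^*$ projections \emph{vanish} on $C$, via the pairwise-annihilating-ideals argument in $C^*$. The paper's version requires the reader to supply why the minimal ${\bf r}$ with $C\subseteq C^{\bf r}$ actually has the ``every nonzero element contains a pattern-${\bf r}$ word'' property (if not, $C$ would sit in a sum of $C^{{\bf r'}}$ for proper subwords ${\bf r'}$, hence in a shorter $C^{{\bf r''}}$, contradicting minimality). Your dualization neatly sidesteps the non-counital issue you correctly flagged, at the cost of a bit more machinery; the paper's coalgebra-product viewpoint is more conceptual but correspondingly sketchier.
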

\begin{proof}
  The Hopf algebra $H:=H({\bf C})$ is the sum of its subcoalgebras
  \begin{equation*}
    C^{\bf r}
    :=
    C^{r_1}\cdot C^{r_2}\cdots C^{r_t},
  \end{equation*}
  having identified $C^r$ with a subcoalgebra of $H$ (as \Cref{cor:cr2h} allows). $C$ is thus a subcoalgebra of some $C^{\bf r}$, and the distinguished ${\bf r}$ of the statement will be the {\it minimal} such, length-wise: the relations \Cref{eq:0up,eq:1up,eq:1down,eq:0down} make it clear that $C^{\bf r}$ consists of linear combinations of $x^{\bf r'}_{{\bf i}{\bf j}}$ for subwords ${\bf r'}$ of ${\bf r}$,  hence uniqueness. 
\end{proof}

\begin{definition}\label{def:zd-val.rk}
  The tuple ${\bf r}\in (\bZ_{\ge d})^s$ attached by \Cref{le:rk} to a simple subcoalgebra $C\le H({\bf C})$ is the {\it ($\braket{\bZ_{\ge d}}$-valued) rank} of $C$.

  The term applies also to the (unique up to isomorphism) simple $C$- (hence also $H({\bf C})$-)comodule. 
\end{definition}

\begin{theorem}\label{th:largedimatinfty}
  Let ${\bf C}=(C^r,\theta^r)$ be a skew coalgebra chain as in \Cref{th:basis4triangchn}, ${\bf r}=(r_s)_{1\le s\le t}\in \braket{\bZ_{\ge d}}$, and
  \begin{equation*}
    D^{r_s}\le C^{r_s}
    ,\quad 1\le s\le t
    \quad\text{simple subcoalgebras}.
  \end{equation*}
  Any simple $H({\bf C})$-subcoalgebra of $D^{\bf r}:=\prod_{1\le s\le t}D^{r_s}$ of rank ${\bf r}=(r_1,\cdots,r_t)$ has dimension $\ge\max_s\dim D^{r_s}$.
\end{theorem}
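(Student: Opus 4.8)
The plan is to reduce, via the word-length filtration on $H(\mathbf C)$, to a statement about a single simple block of the associated graded coalgebra, and then to lower bound its dimension by peeling off one matrix factor at a time across the ``easy'' junctions. Throughout I may assume $\Bbbk$ algebraically closed: by \Cref{pr:scalarext} and \Cref{re:canbealgcl} both $\dim_\Bbbk$ and the basis of \Cref{th:basis4triangchn} are insensitive to extending scalars, and over $\bar\Bbbk$ each simple subcoalgebra is a matrix coalgebra $M_m^*$. Writing $n_s:=\dim W_s$ for the simple $D^{r_s}$-comodule $W_s$, so that $\dim D^{r_s}=n_s^2$, and writing $C\cong M_m^*$ with simple comodule of dimension $m$, the assertion $\dim C\ge\max_s\dim D^{r_s}=(\max_s n_s)^2$ becomes $m\ge n_{s_0}$, where $s_0$ is chosen with $n_{s_0}=\max_s n_s$.

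First I would isolate the ``leading'' coalgebra. Filtering $H=H(\mathbf C)$ by ordinary word length makes $\Delta$ filtered — the matrix comultiplication sends a length-$t$ word to a sum of (length $t$)$\otimes$(length $t$) words, and the reductions \Cref{eq:0up,eq:1up,eq:1down,eq:0down} only shorten — so the associated graded $\mathrm{gr}\,H$ is a graded coalgebra whose degree-$t$ piece is spanned by the reduced length-$t$ basis words. Because $C$ has rank $\mathbf r=(r_1,\dots,r_t)$ of length $t$, \Cref{le:rk} guarantees that every nonzero element of $C$ has a nonzero component of length exactly $t$ (with superscript tuple $\mathbf r$); hence $C$ meets $F_{\le t-1}H$ trivially and the projection $C\to\mathrm{gr}_t H$ is an injective morphism of coalgebras. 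Its image $\bar C\cong C$ is a simple subcoalgebra of the subcoalgebra $\bar C_{\mathbf r}\subseteq\mathrm{gr}_t H$ spanned by the reduced words of superscript tuple exactly $\mathbf r$ with subscripts from the blocks $D^{r_s}$, whose comultiplication is the truncated matrix one $x^{\mathbf r}_{\mathbf i\mathbf j}\mapsto\sum_{\mathbf u}x^{\mathbf r}_{\mathbf i\mathbf u}\otimes x^{\mathbf r}_{\mathbf u\mathbf j}$, the sum running over $\mathbf u$ keeping both factors reduced. It thus suffices to bound the dimension of a simple subcoalgebra of $\bar C_{\mathbf r}$ carrying a genuine rank-$\mathbf r$ word.

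Next I would induct on the length $t$, peeling off the final factor across any \emph{non-resonant} junction. Call $(t-1,t)$ resonant if $r_t=r_{t-1}\pm1$ with the parity producing a forbidden subword in \Cref{eq:forbiddenwords}; since those subwords are the only relations and they are two-letter and adjacent, a non-resonant junction introduces no relation involving position $t$, so $\bar C_{\mathbf r}\cong\bar C_{\mathbf r'}\otimes D^{r_t}$ as coalgebras (the linear independence of the products being exactly \Cref{th:basis4triangchn}), with $\mathbf r'=(r_1,\dots,r_{t-1})$. Over $\bar\Bbbk$ a simple subcoalgebra of a tensor product of coalgebras is a tensor product of simple subcoalgebras, so $\bar C\cong\bar C'\otimes D^{r_t}$ with $\bar C'\subseteq\bar C_{\mathbf r'}$ simple of rank $\mathbf r'$; the inductive hypothesis gives $\dim\bar C'\ge\max_{s<t}\dim D^{r_s}$, and as every factor has dimension $\ge1$, $\dim\bar C=\dim\bar C'\cdot\dim D^{r_t}\ge\max_{s\le t}\dim D^{r_s}$. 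The same argument peels from the front, so I may assume $\mathbf r$ decomposes into maximal fully-resonant runs separated by non-resonant junctions; tensoring over the runs reduces the claim to a single run $r_s=r_1+(s-1)$ (increasing superscripts, up to the evident reflection).

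The single-run bound is where the real work lies, and I expect it to be the main obstacle. Here $\bar C_{\mathbf r}$ is a genuine truncation of $M_{n_0}^*\otimes\cdots\otimes M_{n_{t-1}}^*$: the reductions couple each consecutive pair and, comodule-theoretically, the run realizes a truncated iterated pairing $W_0\otimes W_1\otimes\cdots$ in which successive dual factors partially cancel, the content being that this cancellation never shrinks a full-rank constituent below the largest factor $D^{r_{s_0}}\cong M_{n_{s_0}}^*$. The plan is to produce a \emph{surjection} of coalgebras $\bar C\to D^{r_{s_0}}$ (equivalently a unital embedding $M_{n_{s_0}}\hookrightarrow\bar C^{\,*}\cong M_m$, forcing $n_{s_0}\mid m$ and hence $m\ge n_{s_0}$) by contracting the positions $s\ne s_0$ and reading off position $s_0$. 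The delicate point — and the reason the hypothesis that every class of $\bigvee_r\sim_r$ has size $\ge 2$ is indispensable — is reconciling counitality with comultiplicativity of this contraction: contracting a neighbour diagonally over all of its indices is forced by the counit, yet an extremal ($\uparrow$ or $\downarrow$) neighbour index truncates the position-$s_0$ summation through a forbidden subword and breaks comultiplicativity. Size $\ge 2$ furnishes at each neighbour a non-extremal diagonal index along which no forbidden pattern can fire for any value at position $s_0$, and the crux is to show — using the explicit reductions, in the same spirit as the resolvability computation in \Cref{th:basis4triangchn} — that contraction along these safe indices descends to the simple block $\bar C$ as a genuine surjective coalgebra map. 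Granting this, $m\ge n_{s_0}$ and the theorem follows.
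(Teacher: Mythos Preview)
Your reduction to the associated graded and the tensor decomposition across non-resonant junctions are both fine, but you have not actually proved the theorem: the single-run case is exactly where the content lies, and there you only sketch a plan. The ``contraction'' map you describe is not a coalgebra morphism on $\bar C_{\mathbf r}$, as you yourself observe (an extremal neighbour index truncates the position-$s_0$ summation), and your proposed fix --- contracting along a fixed non-extremal diagonal index at each neighbour --- sacrifices counitality without any argument that the restriction to the particular simple block $\bar C$ repairs this. Since $\bar C$ is an \emph{arbitrary} simple subcoalgebra, there is no evident handle that would force the broken map to become a coalgebra morphism there; ``in the same spirit as the resolvability computation'' is not a proof, and I do not see how to complete it along these lines. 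The phrase ``Granting this'' is doing all of the work.

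The paper's argument is entirely different and far more elementary: no filtration, no decomposition into runs, no coalgebra surjections. Fix any nonzero $x\in C$ and a reduced word $x^{\mathbf r}_{\mathbf{ij}}$ appearing in its expansion. For each of the $n_1=\sqrt{\dim D^{r_1}}$ choices of $u_1\in\cI_1$, extend to a tuple $\mathbf u=(u_1,\dots,u_t)$ by choosing each $u_{s+1}\in\cI_{s+1}$ distinct from $u_s$ (possible precisely by the size-$\ge 2$ hypothesis). Then both $x^{\mathbf r}_{\mathbf{i}\mathbf{u}}$ and $x^{\mathbf r}_{\mathbf{u}\mathbf{j}}$ are reduced: the $\mathbf u$ side has no two consecutive equal letters so no forbidden subword can occur there, and the $\mathbf i,\mathbf j$ sides are reduced because $x^{\mathbf r}_{\mathbf{ij}}$ was. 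Pairing $\Delta x$ on the left with the dual functionals $x^{\mathbf r*}_{\mathbf{i}\mathbf{u}}$ produces $n_1$ linearly independent elements of $x\triangleleft C^*$ (distinguished by their $x^{\mathbf r}_{\mathbf{u}\mathbf{j}}$ terms), so the matrix algebra $C^*$ has a cyclic module of dimension $\ge n_1$, whence $\dim C\ge n_1^2$. Starting the free choice at position $s_0$ instead of position $1$ gives the general case. The size-$\ge 2$ hypothesis enters exactly once, to guarantee $u_{s+1}\ne u_s$; there is no need to isolate resonant runs or to construct any auxiliary coalgebra map.
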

\begin{proof}
  Note that the $D^{r_s}$ are in any case matrix coalgebras, corresponding to square diagonal blocks of $C^{r_s}$ (respectively). There is no harm in assuming the ground field algebraically closed (\Cref{re:canbealgcl}), so that the simple rank-${\bf r}$ $C\le D^{\bf r}$ that concerns us is itself a matrix coalgebra. To fix the notation, we prove only that $\dim C\ge \dim D^{r_1}$. 
 
  The idea behind the proof is that underlying \cite[Proposition 2.6]{zbMATH05807452}. Let $x\in C$, containing the maximal-length term $x^{\bf r}_{{\bf i}{\bf j}}$ in its expansion in the basis of \Cref{th:basis4triangchn}. The $s^{th}$ letter $x^{r_s}_{i_s,j_s}$ of $x^{\bf r}_{{\bf i}{\bf j}}$ is one of the basis elements for the matrix coalgebra $D^{r_s}$, so its subscripts $i_s$ and $j_s$ range independently over a subset
  \begin{equation*}
    \cI_s\subseteq \cI
    ,\quad    
    \left|\cI_s\right|
    =
    n_s
    :=    
    \sqrt{\dim D^{r_s}}.
  \end{equation*}
  We construct $n_1$ length-$t$ tuples ${\bf u}=(u_s)_{1\le s\le t}$ over $\cI$ as follows:
  \begin{itemize}[wide]
  \item pick the first letter $u_1\in \cI_1$ arbitrarily;

  \item then pick the second letter $u_2\in \cI_2$ distinct from $u_1$ (possible by the $(\ge 2)$ assumption transported here from \Cref{th:basis4triangchn});

  \item then the third letter $u_3\in \cI_3$ distinct from $u_2$;
  \item and so on.
  \end{itemize}
  The choice is arbitrary over $\cI_1$ in first instance, and then ensures that no two consecutive letters in ${\bf u}$ coincide. Now note that
  \begin{equation}\label{eq:xrij.comult}
    x^{\bf r}_{{\bf i}{\bf j}}
    \xmapsto{\quad\Delta\quad}
    \sum_{{\bf u}\text{ chosen in this fashion}}
    x^{\bf r}_{{\bf i}{\bf u}}\otimes x^{\bf r}_{{\bf u}{\bf j}}
    +\cdots,
  \end{equation}
  where the $\cdots$ signify tensor products of {\it other} reduced words. The tensorands $x^{\bf r}_{{\bf i}{\bf u}}$ and $x^{\bf r}_{{\bf u}{\bf j}}$ are indeed reduced, by the very shape of the reductions \Cref{eq:0up,eq:1up,eq:1down,eq:0down}: the consecutive letters of ${\bf u}$ are distinct so there is no opportunity for reduction there, whereas the ${\bf i}$ and ${\bf j}$ offer no such opportunity by assumption, $x^{\bf r}_{{\bf i}{\bf j}}$ having been assumed reduced to begin with.

  The outer ${\bf i}$ and ${\bf j}$ on the right-hand side of \Cref{eq:xrij.comult} identify the word on the left-hand side that produced the tensor product by comultiplication, so no other terms in the expansion of $x$ will produce any of the tensor products $x^{\bf r}_{{\bf i}{\bf u}}\otimes x^{\bf r}_{{\bf u}{\bf j}}$. Denoting the `$*$' superscripts the functionals ``dual'' to the word basis of \Cref{th:basis4triangchn}, as in
  \begin{equation*}
    x^{{\bf s}*}_{{\bf u}{\bf v}}
    \left(
      x^{{\bf s'}}_{{\bf u'}{\bf v'}}
    \right)
    =
    \delta_{{\bf r},{\bf r'}}\delta_{{\bf u},{\bf u'}}\delta_{{\bf v},{\bf v'}},
  \end{equation*}
  we have
  \begin{equation*}
    (x^{{\bf r}*}_{{\bf i}{\bf u}}\otimes\id)(\Delta x)
    =
    (\text{coefficient})\cdot x^{\bf r}_{{\bf u}{\bf j}}
    +
    \sum_{\bullet\ne j}(\text{coefficient})\cdot x^{r}_{{\bf u}\bullet}.
  \end{equation*}
  ranging over the $n_1$ ${\bf u}$ will thus produce $n_1$ linearly-independent elements:
  \begin{equation*}
    \dim \left(x\triangleleft C^*\right)\ge n_1
    \quad
    \text{for the dual action \cite[\S 2.2]{dnr}}
    \quad
    x\triangleleft f:=(f\otimes \id)(\Delta(x))
    ,\quad
    \forall f\in C^*.
  \end{equation*}
  The matrix algebra $C^*$ thus has a cyclic module of dimension $\ge n_1$, so must itself have dimension $\ge n_1^2$.
\end{proof}

\begin{corollary}\label{cor:kstepsback}
  Under the hypotheses of \Cref{th:basis4triangchn}, a simple $H({\bf C})$-comodule $V$ whose $k^{th}$ dual $V^{k*}$ ($k\in \bZ_{\ge 0}$) has a simple subquotient whose $\braket{\bZ_{\ge d}}$-valued rank contains the letter $r+k$ has dimension
  \begin{equation*}
    \ge \min\left\{\sqrt{\dim C}\ |\ \text{matrix subcoalgebra }C\le C^r\right\}.
  \end{equation*}
\end{corollary}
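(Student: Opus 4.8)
The plan is to reduce the statement to \Cref{th:largedimatinfty} by showing that the hypothesis forces the $\braket{\bZ_{\ge d}}$-valued rank of $V$ itself to contain the letter $r$. Since $V$ is simple, its coefficient coalgebra $C_V\le H({\bf C})$ (\Cref{con:comod2coalg}) is a simple subcoalgebra, hence carries a well-defined rank ${\bf r}_V=(\rho_1,\dots,\rho_t)$ in the sense of \Cref{def:zd-val.rk}. With $\dim V=\sqrt{\dim C_V}$, the asserted bound is exactly what \Cref{th:largedimatinfty} yields once we know some $\rho_s=r$: for then $C_V$ sits in a product $\prod_s D^{\rho_s}$ of matrix (diagonal) blocks $D^{\rho_s}\le C^{\rho_s}$, and the theorem gives $\dim C_V\ge\max_s\dim D^{\rho_s}\ge\dim D^r$, whence $\dim V\ge\sqrt{\dim D^r}$, which is at least the claimed minimum.

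First I would track the rank under dualization. The coefficient coalgebra of the $k^{th}$ dual is $C_{V^{k*}}=S^k(C_V)$ (using $C_{W^*}=S(C_W)$ from the Introduction), and since the antipode acts by $Sx^s_{ij}=x^{s+1}_{ji}$ it carries $C^s$ onto $C^{s+1}$; being an algebra anti-endomorphism, $S^k$ therefore sends $C^{\rho_1}\cdots C^{\rho_t}$ into a product of the $C^{\rho_s+k}$ (reversed exactly when $k$ is odd). Thus $S^k(C_V)\subseteq C^{\bf s}$ with ${\bf s}$ a shift-by-$k$, and possible reversal, of ${\bf r}_V$, so every rank letter occurring below $S^k(C_V)$ lies in $\{\rho_s+k\}_s$.

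Next I would exploit that the distinguished simple subquotient $W\le V^{k*}$ has $C_W\subseteq C_{V^{k*}}=S^k(C_V)$, since coefficient coalgebras of subquotients only shrink. By the subword description underlying \Cref{le:rk}, the rank of $C_W$ is then a subword of ${\bf s}$; as it is assumed to contain the letter $r+k$, we get $r+k=\rho_s+k$ for some $s$, i.e. $r\in\{\rho_1,\dots,\rho_t\}$. This is precisely the input needed for the final application of \Cref{th:largedimatinfty} sketched above, and the reversal of the tuple when $k$ is odd is harmless, since only the multiset of letters matters for locating $r$.

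The main obstacle I anticipate is bookkeeping rather than conceptual: making the passage from ``${\bf r}_V$ contains $r$'' to ``$C_V$ is a simple subcoalgebra of an explicit $\prod_s D^{\rho_s}$ of matrix blocks'' fully rigorous, so that \Cref{th:largedimatinfty} applies verbatim. Concretely, one picks a maximal-length term $x^{{\bf r}_V}_{{\bf i}{\bf j}}$ occurring in $C_V$ and lets $D^{\rho_s}$ be the diagonal block of $C^{\rho_s}$ housing the $s^{th}$ letter; checking that $C_V$ is genuinely contained in the product of these blocks, and not merely in $C^{\rho_1}\cdots C^{\rho_t}$, uses the matrix-counit comultiplication together with reducedness of the basis, much as in the proof of \Cref{th:largedimatinfty} itself.
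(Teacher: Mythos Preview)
Your proposal is correct and follows precisely the paper's approach: the paper's proof is the single sentence ``Immediate from \Cref{th:largedimatinfty}, given that the $\braket{\bZ_{\ge d}}$-valued rank of such a comodule must contain the letter $r$,'' and your argument simply unpacks why that rank must contain $r$ (via $C_{V^{k*}}=S^k(C_V)$ and the shift $C^s\mapsto C^{s+1}$ under $S$) before invoking the theorem. The block-selection bookkeeping you flag in your last paragraph is not spelled out in the paper either; it is absorbed into how \Cref{th:largedimatinfty} is meant to be read.
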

\begin{proof}
  Immediate from \Cref{th:largedimatinfty}, given that the $\braket{\bZ_{\ge d}}$-valued rank of such a comodule must contain the letter $r$. 
\end{proof}

\begin{corollary}\label{cor:notsurjfrombij}
  Let ${\bf C}=(C^r,\theta^r)$ be a triangular skew coalgebra chain as in \Cref{th:basis4triangchn}. If
  \begin{equation*}
    \lim_{r\searrow-\infty}\min\dim\left(\text{simple subcoalgebra of }C^r\right)=\infty
  \end{equation*}
  (so that in particular the chain is by necessity bi-infinite) then the only Hopf morphisms $H\to H({\bf C})$ for $H$ with bijective antipode are trivial. 
\end{corollary}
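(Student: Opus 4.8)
The plan is to argue by contradiction. Suppose $\phi\colon H\to K:=H({\bf C})$ is a non-trivial Hopf morphism whose source $H$ has bijective antipode $S_H$. I would produce $K$-comodules of one \emph{fixed} dimension whose ranks (\Cref{def:zd-val.rk}) are nonetheless forced to contain arbitrarily negative letters, so that \Cref{cor:kstepsback} inflates their dimensions past any bound — the sought contradiction.

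First I fix something to dualize. The image $B:=\phi(H)$ is a sub-Hopf-algebra of $K$ strictly larger than $\Bbbk$. A short preliminary — checking from the reduced-word basis of \Cref{th:basis4triangchn} that the only grouplike of $K$ is $1$ and that $K$ has no non-zero primitives — rules out $B$ being a non-trivial pointed-irreducible Hopf subalgebra, so that $B_0\ne\Bbbk\cdot 1$; by the standing $(\ge 2)$ hypothesis the simple subcoalgebras of $K$ are genuine matrix coalgebras, so $B$ contains one, $C_0\ne\Bbbk$. Pulling $C_0$ back to the coradical of $H$ writes $C_0=\phi(C_{W_0})$ for a simple $H$-comodule $W_0$ (the image of the simple coalgebra $C_{W_0}$ under the coalgebra map $\phi$ is again simple). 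I record $N:=\dim W_0$, the rank $\mathbf r_0$ of $C_0$ (\Cref{le:rk}), and its least letter $m_0$.

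Now I exploit bijectivity of $S_H$ to descend. Each predual $W_0^{-k*}$ ($k\in\bZ_{\ge 0}$) is a bona fide $N$-dimensional $H$-comodule with coefficient coalgebra $S_H^{-k}(C_{W_0})$, and pushing forward along $\phi$ gives $N$-dimensional $K$-comodules $V_{-k}$ with coefficient coalgebra $C_{-k}:=\phi\bigl(S_H^{-k}(C_{W_0})\bigr)$. Iterating the antipode-intertwining relation $\phi\circ S_H=S\circ\phi$ to $\phi\circ S_H^{k}=S^{k}\circ\phi$ gives
\[
  S^{k}(C_{-k})
  =\phi\bigl(S_H^{k}S_H^{-k}(C_{W_0})\bigr)
  =\phi(C_{W_0})
  =C_0 .
\]
Because $\phi_*$ can only coarsen subcomodule lattices, $V_{-k}$ may fail to be simple in $\cM^{K}$, so I pick a simple subquotient $U\le V_{-k}$, whose coefficient coalgebra $C_U$ lies in $C_{-k}$ and whose dimension is at most $N$. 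Here is the crux: $S$ is not injective on $K$, so I cannot invert it, but I can still push information forward. As $S^{k}(C_U)$ is a non-zero subcoalgebra of the simple coalgebra $S^{k}(C_{-k})=C_0$, it equals $C_0$; and since $U$ is simple, $U^{k*}$ is a simple $K$-comodule with coefficient coalgebra $S^{k}(C_U)=C_0$ (using $C_{W^{*}}=S(C_{W})$). Thus $U^{k*}$ has rank $\mathbf r_0$, in particular containing the letter $m_0=(m_0-k)+k$.

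\Cref{cor:kstepsback}, applied to the simple comodule $U$ with $r:=m_0-k$, therefore yields
\[
  N\ \ge\ \dim U\ \ge\ \min\Bigl\{\sqrt{\dim C}\ \Big|\ \text{matrix subcoalgebra }C\le C^{\,m_0-k}\Bigr\}.
\]
Letting $k\to\infty$ drives the level $m_0-k\searrow-\infty$, whereupon the standing hypothesis $\lim_{r\searrow-\infty}\min\dim(\text{simple subcoalgebra of }C^{r})=\infty$ sends the right-hand side to $\infty$, contradicting the fixed bound $N$ and forcing $B=\Bbbk$, i.e.\ $\phi$ trivial. I expect the genuine difficulty to be concentrated in the previous paragraph: tracking, in the presence of a non-injective antipode, that a simple subquotient $U$ of the (generally non-simple) pushforward really does satisfy $S^{k}(C_U)=C_0$, so that its rank is guaranteed to acquire the controlled letter $m_0$ after $k$-fold dualization rather than degenerating to a shorter rank. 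This rests on the identity $C_{W^{*}}=S(C_{W})$ together with simplicity of $C_0$ and the intertwining $\phi\circ S_H=S\circ\phi$; the preliminary step of excluding a non-trivial connected image is the only other point requiring separate (and routine) verification.
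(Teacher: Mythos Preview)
Your argument is correct and follows essentially the same route as the paper's: produce a non-trivial simple $K$-subquotient of (the pushforward of) some finite-dimensional $H$-comodule, use bijectivity of $S_H$ to form preduals of fixed dimension, and invoke \Cref{cor:kstepsback} as $k\to\infty$ to force an unbounded lower bound on that fixed dimension. The only cosmetic difference is that you first arrange $C_0=\phi(C_{W_0})$ to be simple in $K$ (via the no-primitives check), which lets you conclude $S^k(C_U)=C_0$ for \emph{every} simple subquotient $U$, whereas the paper simply selects the particular $U$ whose $k^{th}$ dual recovers the original $V$; both reach \Cref{cor:kstepsback} the same way.
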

\begin{proof}
  Let $H\xrightarrow{\varphi} H({\bf C})$ be a {\it non}-trivial morphism for bijective-antipode $H$. Some simple $H$-comodule $W$, regarded as an $H({\bf C})$-comodule via $\varphi$, has a simple subquotient $V$ of non-trivial rank ${\bf r}\in \braket{\bZ}$. The antipode of $H$ being bijective, $W$ has iterated right duals
  \begin{equation*}
    W^{-k*}
    ,\quad
    \dim W^{-k*} = \dim W,\quad \forall k\in \bZ_{\ge 0}.
  \end{equation*}
  Now, if $r_0\in \bZ$ is a letter featuring in ${\bf r}$, $r_0-k$ must feature in the rank of some simple subquotient of $W^{-k*}$. As $k\to\infty$ the dimensions of such simples increase indefinitely by \Cref{cor:kstepsback}, contradicting the fact that they can arise as such subquotients.
\end{proof}

\begin{example}\label{ex:notquotbij}
  Instances of \Cref{cor:notsurjfrombij} are easily produced. Take, say
  \begin{itemize}[wide]
  \item $C^r=M_2^{\oplus \bZ}$ and $\theta^r=\id$ for all $r\ge -1$;

  \item whereas for $r<0$ $C^r$ is a sum of countably infinitely many copies of $M_{2^{-r}}$;

  \item and each $\theta^r$, $r<-1$ surjects one $2^{-r}\times 2^{-r}$ onto a sum of two diagonal blocks half the size:
    \begin{equation*}
      \begin{pmatrix}
        *&*\\
        *&*
      \end{pmatrix}
      \xrightarrowdbl{\quad}
      \begin{pmatrix}
        *&0\\
        0&*
      \end{pmatrix}.
    \end{equation*}
  \end{itemize}
  For negative $r$ the simple $C^r$-comodules are $2^{-r}$-dimensional, so that we do fit the framework of \Cref{cor:notsurjfrombij}. 
\end{example}

\begin{remark}\label{re:bratt}
  \Cref{ex:notquotbij} is one particular straightforward way to meet the requirements of \Cref{cor:notsurjfrombij}; it should be clear that the construction admits of much variation. Even restricting attention to cosemisimple coalgebras (as in \Cref{ex:notquotbij}), one can parametrize the surjections $C^r\xrightarrowdbl{} C^{r+1}$ by the {\it Bratteli diagrams} \cite[Chapter 2]{effr_dim} pervasive in the study of {\it AF} ({\it approximately finite} \cite[Chapter 2, p.11]{effr_dim}) operator algebras. 
\end{remark}


\addcontentsline{toc}{section}{References}

\Addresses

\end{document}